\let\Oldsection\section
\renewcommand{\section}{\FloatBarrier\Oldsection}
\let\Oldsubsection\subsection
\renewcommand{\subsection}{\FloatBarrier\Oldsubsection}
\let\Oldsubsubsection\subsubsection
\renewcommand{\subsubsection}{\FloatBarrier\Oldsubsubsection}
\newcommand{\shortdot}[1]{\raisebox{-0.4pt}{$\stackrel{\bullet}{#1}$}}
\theoremstyle{plain}
\newtheorem{theorem}{Theorem}[section]
\newtheorem{lemma}[theorem]{Lemma}
\theoremstyle{definition}
\theoremstyle{remark}
\newtheorem*{remark}{Remark}
\begin{document}
\title{Queues with Updating Information: Finding the Amplitude of Oscillations}
\author{ Philip Doldo \\ Center for Applied Mathematics \\ Cornell University
\\ 657 Rhodes Hall, Ithaca, NY 14853 \\  pmd93@cornell.edu  \\ 
\and
  Jamol Pender \\ School of Operations Research and Information Engineering \\ Center for Applied Mathematics \\ Cornell University
\\ 228 Rhodes Hall, Ithaca, NY 14853 \\  jjp274@cornell.edu 
 }    

\maketitle

\begin{abstract} 
Many service systems provide customers with information about the system so that customers can make an informed decision about whether to join or not.  Many of these systems provide information in the form of an update.  Thus, the information about the system is \textbf{updated} periodically in increments of size $\Delta$.  It is known that these updates can cause oscillations in the resulting dynamics.  However, it is an open problem to explicitly characterize the size of these oscillations when they occur.  In this paper, we solve this open problem and show how to exactly calculate the amplitude of these oscillations via a fixed point equation.  We also calculate closed form approximations via Taylor expansions of the fixed point equation and show that these approximations are very accurate, especially when $\Delta$ is large.  Our analysis provides new insight for systems that use updates as a way of disseminating information to customers.     
\end{abstract}

%**************************************************************************
%**************************************************************************

\section{Introduction} \label{sec_intro}

In many queueing systems, customers are provided information about the queue length or waiting time so customers can make an appropriate decision about whether to join or not.  This queue length information has the potential to impact a service system in a variety of ways.  However, it is often that this queue length or waiting time information is not provided in real-time.  One cause of this delay in information is that it often takes time to process and push the information to customers.  Much of the delayed information literature focuses on delays that are constant throughout time, see for example  \citet{ mitzenmacher2000useful,  lipshutz2015existence, lipshutz2017exit, pender2017queues, pender2017strong, pender2018analysis, Lipshutz2018, nirenberg2018impact, novitzky2020limiting}.  

However, in practice many of these delays are caused by systems that update periodically.  Periodic updates help service systems managers balance the cost of information and computational issues into consideration.  Unfortunately, there is not much research on queueing systems with updates and in fact the only paper that considers such models is \citet{novitzky2020update}.  Moreover, in \citet{novitzky2020update} the authors prove a functional strong law of large numbers limit showing that an appropriately scaled queueing process converges to a functional differential equation (FDE) system.  With the limiting FDE system, they show that the system can undergo a Hopf bifurcation when the updating interval $\Delta$ is sufficiently large.  However, there is some research from the dynamical systems community on control systems with updates.  In the control literature, these types of updates are called piecewise continuous arguments or delays \citet{silkowski1979star, cooke1984retarded, aftabizadeh1987differential, wiener1989oscillations, cooke1991survey}.  Not surprisingly, much of the research centers around determining the stability regions for these types of systems to understand when oscillations will or will not occur.  

However, one important open problem that remains is to find the amplitude of the oscillations that result from the Hopf bifurcations.  Instability in a queueing system causes queue lengths to oscillate with time which can lead to inefficienies due to some servers being overworked and others being underworked.  Being able to compute the amplitude of the queue length oscillations could help a service manager quantify the level of ineffiency present due to the system's instability.  Typically oscillations in systems with delays are calculated approximately by a method called Lindstedt's method.  In fact, how to approximate the amplitude of oscillations in queueing systems with delays is outlined in \citet{novitzky2019nonlinear}.  Despite the analysis carried out in \citet{novitzky2019nonlinear} it still remains an open question to compute the amplitude of oscillations in the FDE model we study in this paper.  The main reason is that the delay is \textbf{non-stationary} with respect to time, which renders the Lindstedt approach ineffective.  A new approach must be derived, which is the main focus of this work. Thus, in this paper, we answer the following question: when oscillations occur, how does the amplitude of the oscillations depend on the model parameters?

%**************************************************************************
%**************************************************************************
 \subsection{Main Contributions of Paper}

The contributions of this work can be summarized as follows:    
\begin{itemize}
\item We develop a functional differential equation model for queues with updating information.
\item We derive a fixed-point equation for computing the steady-state amplitude for our updating queueing system when the system undergoes a Hopf bifurcation in the two-dimensional case.  
\item We derive new closed-form approximations for the amplitude using first order and second order Taylor expansions and show that the amplitude can be upper bounded by the first order Taylor expansion in the two-dimensional case.
\item We derive nonlinear equations that can be solved to compute the steady-state amplitude when the system undergoes a Hopf bifurcation in the $N$-dimensional case for $N > 2$.
\item We derive closed-form approximations of the amplitudes using first-order Taylor expansions in the $N$-dimensional case for $N > 2$.
\end{itemize} 

%**************************************************************************
%**************************************************************************

\subsection{Organization of Paper}

The remainder of this paper is organized as follows.   In Section \ref{updating_queueing_model_section} we introduce the updating queueing model, determine how to compute the steady-state amplitude of the queue length oscillations in the two-dimensional setting, and we introduce linear and quadratic closed-form approximations of the steady-state amplitude. In Section \ref{multidimensional_case_section}, we consider finding the steady-state amplitudes of queue length oscillations when we have $N > 2$ queues in our system. We consider separately the cases when $N$ is even and when $N$ is odd and in each case we compare the result with closed-form approximations of the steady-state amplitudes. Finally, in Section \ref{conclusion}, we give concluding thoughts and discuss potential ideas for future research.

%**************************************************************************
%**************************************************************************

\section{Updating Queueing Model}
\label{updating_queueing_model_section}

In this section, we present a functional dynamical system queueing model where we have $N$ queues operating in parallel and customers choose which station to join via a customer choice model that depends on the queue length.  We assume that the total arrival rate to the system (sum of all queues) is $\lambda$, the service rate for each of the infinite number of servers at each queue is given by $\mu$.  In the spirit of delayed information, customers do not observe the real-time queue length.  However, customers observe the queue length at the time of the most current update, which are periodic with size $\Delta$.  Thus, the information that the customer receives is actually the queue length $t - \left \lfloor \frac{t}{ \Delta} \right \rfloor \Delta $ time units in the past,  The function $t - \left \lfloor \frac{t}{ \Delta} \right \rfloor \Delta $ is also known as a sawtooth function.  Thus, the customer will not make their decision on which queue to join based on the real-time queue length $q(t)$, but they will make their decision based on $q\left(\left \lfloor \frac{t}{ \Delta} \right \rfloor \Delta \right)$, which is precisely the queue length at the time of the previous update.   

One important thing to note is that the sawtooth function $t - \left \lfloor \frac{t}{ \Delta} \right \rfloor \Delta $ \textbf{is not constant} like in a constant delay model and increases linearly within an updating interval.  Thus, if we interpret the update as a delay in information, then the delay that the customer experiences is \textbf{non-stationary} and is not constant.  This is an important distinction from the constant delay models studied in \citet{ pender2017queues, pender2018analysis} and the non-stationary behavior of the periodic updates will add additional complexities to our analysis of the updating model.  We will also show that we cannot rely on previous analytical methods that were developed in \citet{pender2017strong} for the constant delay setting because of the non-stationarity of the delay function.  Non-stationary delay FDE models have been explored in the dynamical systems and control theory literature, see for example \citet{louisell2001delay, niculescu1998robust, cooke1991survey}.  However, much of this literature focuses on either looking at the average variation of the time varying function or providing robust bounds for stability with a non-stationary delay function.    

The $N$-dimensional queueing model that our work focuses on is given by the following system of functional differential equations.  
\begin{eqnarray} \label{fdesN}
\overset{\bullet}{q}_i(t) = \lambda \cdot \frac{\exp \left( - \theta \cdot q_i(\Phi(t, \Delta)) \right) }{\sum_{j=1}^{N} \exp\left( -\theta \cdot q_j( \Phi(t, \Delta) \right)} - \mu q_i(t), \hspace{5mm} i = 1, ..., N
\end{eqnarray}

\noindent where $$\Phi(t, \Delta) := \bigg\lfloor \frac{t}{\Delta} \bigg\rfloor \Delta.$$ In this model, the parameter $\lambda > 0$ represents the arrival rate into the queueing system, $\mu > 0$ represents the service rate of the system, and $q_i(t)$ represents the length of the $i^{\text{th}}$ queue at time $t$. We use a multinomial logit model to model customer choice which is informed by information from the most recent update time. That is, $$\frac{\exp \left( - \theta \cdot q_i(\Phi(t, \Delta)) \right) }{\sum_{j=1}^{N} \exp\left( -\theta \cdot q_j( \Phi(t, \Delta) \right)}$$ can be interpreted as the probability that a customer joins the $i^{\text{th}}$ queue, where we note that this probability depends on the queue lengths at time $\Phi(t, \Delta)$, which is the most recent update time that occurred at or before time $t$, so customers are only informed by queue length information from this time which will update once $\big\lfloor \frac{t}{\Delta} \big\rfloor$ changes value. Stability properties of the model were considered in \citet{novitzky2020update} where it was found that this system undergoes a Hopf bifurcation at the critical delay $$\Delta_{\text{cr}} = \frac{\ln \left( 1 + \frac{2}{ \frac{\lambda \theta}{\mu N} -1  }  \right)}{\mu}$$ provided that $\frac{\lambda}{\mu N} < 1.$ We note that as $N$ increases, so does the critical delay (assuming that the condition $\frac{\lambda}{\mu N} < 1$ is not violated). Moving forward, we will be interested in computing the steady-state amplitude of the queue length oscillations that result from the Hopf bifurcation.

%%%%%%%%%%%%%%%%%
\subsection{Two-Dimensional Case}
Before thinking about the more general multi-dimensional case, we explore the two-dimensional case to gain important intuition about the model.  In the two-dimensional case, our updating queueing system is given by the following functional differential system
\begin{eqnarray} \label{fdes}
\overset{\bullet}{q}_i(t) = \lambda \cdot \frac{\exp \left( - \theta \cdot q_i(\Phi(t, \Delta)) \right) }{\sum_{j=1}^{2} \exp\left( -\theta \cdot q_j( \Phi(t, \Delta) \right)} - \mu q_i(t), \hspace{5mm} i = 1, 2
\end{eqnarray}

\noindent where $$\Phi(t, \Delta) := \bigg\lfloor \frac{t}{\Delta} \bigg\rfloor \Delta.$$ %In this model, the parameter $\lambda > 0$ represents the arrival rate into the queueing system, $\mu > 0$ represents the service rate of the system, and $q_i(t)$ represents the length of the $i^{\text{th}}$ queue at time $t$. We use a multinomial logit model to model customer choice which is informed by information from the most recent update time. That is, $$\frac{\exp \left( - \theta \cdot q_i(\Phi(t, \Delta)) \right) }{\sum_{j=1}^{2} \exp\left( -\theta \cdot q_j( \Phi(t, \Delta) \right)}$$ can be interpreted as the probability that a customer joins the $i^{\text{th}}$ queue, where we note that this probability depends on the queue lengths at time $\Phi(t, \Delta)$, which is the most recent update time that occurred at or before time $t$, so cutstomers are only informed by queue length information from this time which will update once $\big\lfloor \frac{t}{\Delta} \big\rfloor$ changes value. 

 Thus, for $k \in \mathbb{Z}^+ \cup \{ 0 \}$ and $\Delta > 0$, we have that $$\Phi(t, \Delta) = k \Delta$$ when $t \in [k \Delta, (k+1) \Delta)$ and on this interval our updating queueing system reduces to a system of ordinary differential equations (ODEs) i.e.

\begin{eqnarray}
\overset{\bullet}{q}_i(t) = \lambda \cdot \frac{\exp \left( - \theta \cdot q_i(k \Delta) \right) }{\sum_{j=1}^{2} \exp\left( -\theta \cdot q_j( k \Delta) \right)} - \mu q_i(t), \hspace{5mm} i = 1, 2.
\end{eqnarray}

This is a nice observation because we can analyze the system on each interval of size $\Delta$ and repeat the process over and over again.  One other observation to make is to notice that on each interval of size $\Delta$ the arrival rate to each queue is fixed.  Thus, on a specific interval, one can view the queueing system as a system of infinite server queues with a constant arrival rate.  From a differential equations perspective, this reduces to linear ODEs and allows us to get a closed-form solution in each interval given the starting point.  However, before we begin to use these observations to find the amplitude for the queueing model, we find it important to recall a standard result for linear differential equations. We will exploit this result in the sequel.  

\begin{lemma} \label{ode_soln}
Let q(t) be the solution to the following differential equation
\begin{equation}
\shortdot{q} = \alpha - \beta q(t) 
\end{equation}
where q(0) = x.  Then the solution for any value of $t$ is given by 
\begin{equation}
q(t) = x e^{-\beta t} + \frac{\alpha}{\beta} \left( 1 - e^{-\beta t} \right).
\end{equation}
\begin{proof}
This follows from standard results on ordinary differential equations.  
\end{proof}
\end{lemma}

Using Lemma \ref{ode_soln}, we observe that on the interval $t \in [ k \Delta, (k+1) \Delta)$, we can solve each ODE to get 
\begin{equation}
\label{q_of_t_eqn}
q_i(t) = q_i(k \Delta) e^{-\mu (t - k \Delta)} +\rho \frac{\exp \left( - \theta \cdot q_i(k \Delta) \right)}{\sum_{j=1}^{2} \exp(-\theta \cdot q_j(k \Delta))}(1 - e^{-\mu (t - k \Delta)}).
\end{equation} 

\noindent where, for ease of notation, we have defined $$\rho := \frac{\lambda}{\mu}$$ and will continue to use this definition throughout the paper. This implies that

%\noindent This implies that for each interval of $\Delta$, we have that 
\begin{equation} \label{queue_delta}
q_i((k+1) \Delta) = q_i(k \Delta) e^{-\mu  \Delta} +\frac{  \rho  \exp \left( - \theta \cdot q_i(k \Delta) \right)}{\sum_{j=1}^{2} \exp(-\theta \cdot q_j(k \Delta))}(1 - e^{-\mu \Delta}).
\end{equation}

%\begin{figure}[ht]
%\includegraphics[scale=.55]{./}
%\caption{The queue lengths plotted against time for where the horizontal lines represent our approximations of the amplitude of the oscillations. This plot is for $\lambda = 10, \mu = 1, \theta=1, \Delta=.45$, and for $t \in [- \Delta, 0]$ we have $q_1 = .49$ and $q_2 = .51.$}
%\label{fig_intuition}
%\end{figure}

\begin{figure}[ht]
\includegraphics[scale=.4]{./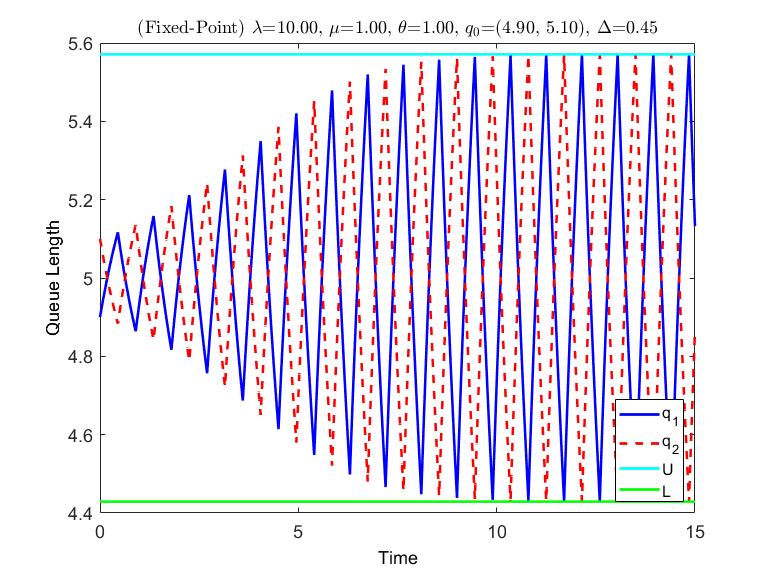}
\caption{The queue lengths plotted against time for where the horizontal lines represent our approximations of the amplitude of the oscillations. This plot is for $\lambda = 10, \mu = 1, \theta=1, \Delta=.45$, and for $t \in [- \Delta, 0]$ we have $q_1 = .49$ and $q_2 = .51.$}
\label{fig_intuition}
\end{figure}

%We see in Equation \ref{queue_delta} that we can describe the dynamics We show in the following Theorem that we can exactly compute the amplitude of the oscillations in the two dimensional case using a fixed-point equation.  

We see from Equations \ref{q_of_t_eqn} and \ref{queue_delta} that we can describe the dynamics of the update system. We show in the following Theorem that we can exactly compute the amplitude of the oscillations in the two-dimensional case using a fixed-point equation.

\begin{theorem}
Given that $(q_1(t),q_2(t))$ solves the dynamics given in Equation \ref{fdes} and a Hopf bifurcation occurs, then the steady-state minimum, $L$, is the non-trivial solution to the following fixed-point equation

\begin{eqnarray}
\rho - L &=& L e^{-\mu \Delta} + \rho \frac{e^{-\theta \cdot L}}{e^{-\theta \cdot L} + e^{- \theta \cdot (\rho - L)}}(1 - e^{- \mu \Delta}). \label{subequation}
\end{eqnarray}
Moreover, the amplitude of oscillations is given by the formula 
\begin{equation} \label{amp}
\mathrm{Amplitude} = \frac{\rho}{2} - L.  
\end{equation}

\begin{proof}
Before we start a more formal proof, we think it is important to build some intuition for this result.  The first observation to make is that we know that as time gets large after a Hopf bifurcation that the queueing system settles down to a periodic steady state.  The second observation is that in this steady state, the first queue grows to a maximum and the second queue shrinks to a minimum.  When the time crosses another integer multiple of delta, the queues switch and the first queue drops to the minimum and the second queue increases to the maximum.  This happens indefinitely as the system of functional differential equations has undergone a Hopf bifurcation.  As a result, we will show that our queueing problem can be mapped to one a problem where a capacitor charges and subsequently discharges. 

Thus, when the update system is unstable, we have that the queue lengths will oscillate periodically and that these oscillations will approach some limiting amplitude. That is, if we let $L$ denote the greatest lower bound of the queue lengths for all time and $U$ denote the least upper bound of the queue lengths as time gets large, then for some sufficiently large $k \in \mathbb{Z}^+$ and for one of the queues (without loss of generality, let this queue be $q_1$) we have that $$q_1(k \Delta) = U$$ and $$q_1((k+1)\Delta) = L.$$

\noindent By following the differential equation given in Equation \ref{fdes} for $\Delta$ time units using Equation \ref{queue_delta}, this implies that
\begin{eqnarray}
L &=& q_1(k\Delta) e^{- \mu \Delta} + \rho \frac{e^{-\theta \cdot q_1(k\Delta)}}{e^{-\theta \cdot q_1(k \Delta)} + e^{-\theta \cdot q_1((k+1)\Delta)}} (1 - e^{- \mu \Delta}) \\
&=& U e^{- \mu \Delta} + \rho \frac{e^{-\theta \cdot U}}{e^{-\theta \cdot U} + e^{-\theta \cdot L}} (1 - e^{- \mu \Delta}). \label{Lequation}
\end{eqnarray}

\noindent Similarly, for any $k' \in \mathbb{Z}^+$ such that $k' = k + (2n+1) m$ for some $m \in \mathbb{Z}^+$, we have that $$q_1(k'\Delta) = L$$ and $$q_1( (k'+1) \Delta) = U$$ and we get that 

\begin{eqnarray}
U &=& q_1(k'\Delta) e^{- \mu \Delta} + \rho \frac{e^{-\theta \cdot q_1(k'\Delta)}}{e^{-\theta \cdot q_1(k' \Delta)} + e^{-\theta \cdot q_1((k'+1)\Delta)}} (1 - e^{- \mu \Delta})\\
&=& L e^{- \mu \Delta} + \rho \frac{e^{-\theta \cdot L}}{e^{-\theta \cdot L} + e^{-\theta \cdot U}} (1 - e^{- \mu \Delta}). \label{Uequation}
\end{eqnarray}

\noindent Thus, finally we arrive at a two-dimensional system of nonlinear equations i.e.

\begin{eqnarray}
L &=& U e^{- \mu \Delta} + \rho \frac{e^{-\theta \cdot U}}{e^{-\theta \cdot U} + e^{-\theta \cdot L}} (1 - e^{- \mu \Delta})  \label{LandU1} \\
U &=& L e^{- \mu \Delta} + \rho \frac{e^{-\theta \cdot L}}{e^{-\theta \cdot L} + e^{-\theta \cdot U}} (1 - e^{- \mu \Delta}).  \label{LandU2}
\end{eqnarray}

\noindent Now by adding Equations \ref{LandU1} and \ref{LandU2}, we observe the following relationship between the upper and lower values
\begin{eqnarray}
L  + U &=&  \rho .
\end{eqnarray}

\noindent Thus, we can use this observation to substitute the quantity $U = \rho - L$ in Equation \ref{LandU2}  to get an expression that completely depends on L.  Once one makes this substitution, one obtains the following equation
%\noindent From Equations \ref{Lequation} and \ref{Uequation}, we see that $$U + L \approx (U + L)e^{- \mu \Delta} + \frac{\lambda}{\mu}(1 - e^{- \mu \Delta})$$ so that $$U + L \approx \frac{\lambda}{\mu}.$$ We can also substitute $$U \approx \frac{\lambda}{\mu} - L$$ into Equation \ref{Uequation} to get 
\begin{eqnarray}
\rho - L = L e^{-\mu \Delta} + \rho \frac{e^{-\theta \cdot L}}{e^{-\theta \cdot L} + e^{- \theta \cdot (\rho - L)}}(1 - e^{- \mu \Delta}). \label{subequation}
\end{eqnarray}

\noindent We can solve Equation \ref{subequation} numerically to get a value for $L$ and then use the fact that $U = \rho - L$ to get a value for $U$. Moreover, since we can determine the values of $L$ and $U$, we can determine the amplitude of the oscillations of the queue lengths by observing that the amplitude is given by the following formula
\begin{eqnarray}
\text{Amplitude} = \frac{U - L}{2} = \frac{ \left(\rho - L \right) - L}{2} = \frac{\rho}{2} - L .
\end{eqnarray}
This completes the proof.  
\end{proof}
\end{theorem}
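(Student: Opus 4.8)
The plan is to use the reduction already recorded above: sampled at the update epochs $k\Delta$, the functional differential system \eqref{fdes} becomes the explicit one-step recursion \eqref{queue_delta}, and within each interval $[k\Delta,(k+1)\Delta)$ the arrival rate seen by a queue is frozen, so \eqref{q_of_t_eqn} shows $q_i$ relaxes monotonically there and hence has no interior extremum. The steady-state infimum $L$ and supremum $U$ of the queue lengths are therefore attained at update times, so it suffices to analyze the discrete dynamics \eqref{queue_delta}.

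Next I would exploit a symmetry. The sum $s(t) := q_1(t) + q_2(t)$ satisfies $\dot s(t) = \lambda - \mu\, s(t)$, because the two multinomial-logit probabilities add to $1$ regardless of the delayed queue lengths; by Lemma~\ref{ode_soln}, $s(t) \to \rho$, so in the periodic steady state $q_2(t) = \rho - q_1(t)$ for all $t$, and in particular $L + U = \rho$ (the same relation one gets by adding the two update equations). Substituting $q_2(k\Delta) = \rho - q_1(k\Delta)$ into \eqref{queue_delta} collapses the update-time dynamics of $q_1$ to a scalar map $x \mapsto f(x) = x\, e^{-\mu\Delta} + \rho\, \dfrac{e^{-\theta x}}{e^{-\theta x} + e^{-\theta(\rho - x)}}\,(1 - e^{-\mu\Delta})$, which satisfies $f(\rho/2) = \rho/2$ and the antisymmetry $f(\rho - x) = \rho - f(x)$.

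Now I would read off the bifurcation structure. A short computation gives $f'(\rho/2) = e^{-\mu\Delta} - \tfrac{\lambda\theta}{2\mu}(1 - e^{-\mu\Delta})$, which equals $-1$ precisely at $\Delta = \Delta_{\mathrm{cr}}$; so the Hopf bifurcation of \eqref{fdes} is, at the level of this scalar map, the loss of stability of the symmetric fixed point $\rho/2$ as $f'(\rho/2)$ crosses $-1$. Just past the bifurcation the attracting orbit of $f$ is a period-two orbit $\{L,U\}$ with $f(L) = U$ and $f(U) = L$ — which is exactly the $U,L,U,L,\dots$ pattern traced by consecutive update-time samples of $q_1$ in the $2\Delta$-periodic limit cycle. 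Writing out the relation $U = f(L)$ and substituting $U = \rho - L$ gives exactly the fixed-point equation \eqref{subequation}; its trivial root $L = \rho/2$ is the unstable equilibrium and its other, nontrivial root (with $L < \rho/2$) is the steady-state minimum. Since the amplitude is half the peak-to-peak range, it equals $(U - L)/2 = (\rho - 2L)/2 = \rho/2 - L$, which is \eqref{amp}.

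The step I expect to be the main obstacle is the assertion that the steady-state limit cycle is genuinely $2\Delta$-periodic, i.e. that the attracting invariant set of $f$ is a period-two orbit rather than a longer-period orbit or a more complicated set. Immediately past $\Delta_{\mathrm{cr}}$ this follows from the standard supercritical period-doubling normal form together with the shape of $f$ near $\rho/2$, but for $\Delta$ far beyond $\Delta_{\mathrm{cr}}$ one would have to rule out further bifurcations of $f$; I would therefore either restrict attention to the regime in which the period-two orbit is the global attractor, or invoke the bifurcation analysis of \citet{novitzky2020update} for the existence and nature of the bifurcating periodic solution. A milder secondary point is showing that \eqref{subequation} has a unique nontrivial root and that it is the minimum $L$ rather than the maximum $U = \rho - L$.
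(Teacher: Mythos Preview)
Your proposal is correct and follows the same overall architecture as the paper: reduce to the one-step recursion \eqref{queue_delta} at update epochs, write the two coupled equations for the steady-state extremes $L$ and $U$, obtain $L+U=\rho$, substitute $U=\rho-L$ to get the scalar fixed-point equation, and read off the amplitude as $(U-L)/2=\rho/2-L$.

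There are two genuine differences worth noting. First, you derive $L+U=\rho$ from the linear ODE $\dot s=\lambda-\mu s$ for the sum $s=q_1+q_2$, which actually yields the stronger statement $q_2(t)=\rho-q_1(t)$ for all $t$ in the periodic steady state; the paper instead obtains $L+U=\rho$ a posteriori by adding the two nonlinear update equations \eqref{LandU1}--\eqref{LandU2}. Your route is cleaner and also lets you collapse the dynamics to a single scalar map $f$ with the antisymmetry $f(\rho-x)=\rho-f(x)$, from which the period-two structure and the computation $f'(\rho/2)=-1$ at $\Delta_{\mathrm{cr}}$ fall out transparently; the paper does not make this period-doubling picture explicit. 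Second, you supply the argument (monotone relaxation on each interval) that the extrema are attained at update times, and you flag the step the paper simply assumes---that the attracting orbit of $f$ has period two rather than something longer---as the real analytical obstacle. The paper's proof takes this $2\Delta$-periodic $U,L,U,L,\dots$ pattern as given from the outset, so on this point you are being more careful than the paper, not less.
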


Now that we have a fixed-point equation to derive the ampltiude of the oscillations, it is important to see numerically how this works.  In Figure \ref{fig2}, we plot two different examples of oscillating queues.  We see in both plots that our fixed-point equation captures the correct amplitude of the oscillations.  Thus, we immediately see the value of our fixed-point equation to give us insight about the size of oscillations in queues with information updates.  
 
%\begin{figure}
%\centering 
%\includegraphics[scale=.43]{./Figures/ex1.jpg}~\includegraphics[scale=.43]{./}
%\caption{The queue lengths plotted against time where the horizontal lines represent our approximations of the amplitude of the oscillations by numerically solving the fixed-point equation for $L$. These plots are for $\lambda = 10, \mu = 1, \theta=1$, and for $t \in [- \Delta, 0]$ we have $q_1 = .49$ and $q_2 = .51.$ In the left plot $\Delta=.45$ and in the right plot $\Delta = .7$.}
%\label{fig2}
%\end{figure}

\begin{figure}
\centering 
\includegraphics[scale=.3]{./Figures/FP_ex4_hq.jpg}~\includegraphics[scale=.3]{./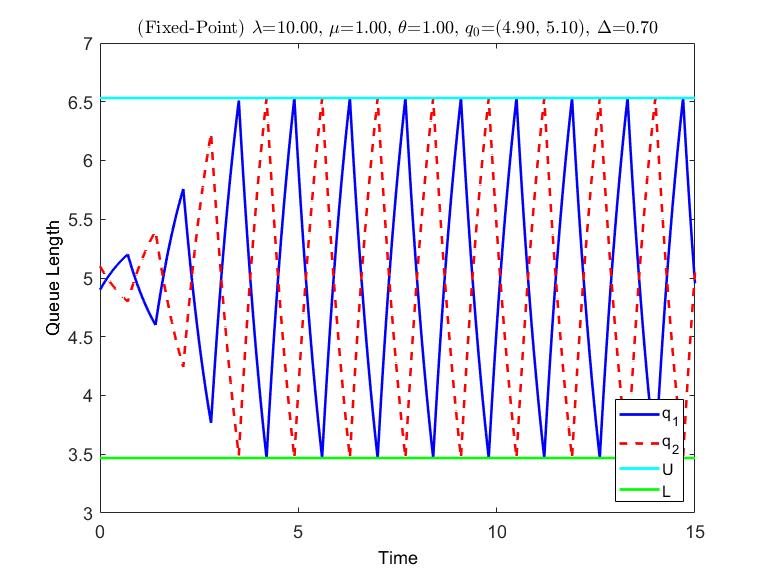}
\caption{The queue lengths plotted against time where the horizontal lines represent our approximations of the amplitude of the oscillations by numerically solving the fixed-point equation for $L$. These plots are for $\lambda = 10, \mu = 1, \theta=1$, and for $t \in [- \Delta, 0]$ we have $q_1 = .49$ and $q_2 = .51.$ In the left plot $\Delta=.45$ and in the right plot $\Delta = .7$.}
\label{fig2}
\end{figure}

Although we have a quite simple fixed-point equation which we can solve numerically to find the amplitude of the oscillations, it still is of interest to have closed-form formulas even if they are only approximately correct.  These closed-form formulas or approximations provide insight into the problem and give the size of the amplitude explicitly in terms of the model parameters.  This is especially valuable in some parameter regimes where some parameters are large.  To this end, in the sequel, we provide two approximations for the amplitude using first-order and second-order Taylor expansions of the multinomial logit probability function and demonstrate in some numerical examples that these approximations are quite accurate.

%While having a quite simple fixed-point to solve numerically to find the amplitude of the oscillations, it still is of interest to have closed form formulas even if they are approximately correct.  These closed form formulas or approximations provide insight into the problem and give the size of the amplitude explicitly in terms of the model parameters.  This is especially valuable in some parameter regimes where some parameters are large.  To this end, in the sequel, we provide two approximations for the amplitude using first and second order Taylor expansions of the multinomial logit probability function and demonstrate in some numerical examples that these approximations are quite accurate.  

%\textcolor{blue}{We should add a table of numerical values, fixed-point, first order taylor, second order taylor.  This will convince readers that what we are doing has value in terms of approximations. EDIT: Let me (Phil) know if you want different values of $\Delta$ in the Table or if you want different parameters varied in other tables (I can generate these tables very quickly).}  

Before proceeding, we define the multinomial logit function $f : \mathbb{R} \to \mathbb{R}$ such that $$f(x) = \frac{e^{-\theta x}}{e^{-\theta x} + e^{-\theta(\rho - L)}}$$ and for convenience we evaluate $f$ and some of its derivatives at values that will be used in some upcoming proofs.

\begin{eqnarray*}
%\rho &=& \frac{\lambda}{\mu} \\
f(L) &=& \frac{e^{- \theta L}}{e^{- \theta L} + e^{-\theta (\rho - L)}} =  \frac{1}{1+ e^{-\theta (\rho - 2L)}} \\
f'(L) &=& =  \frac{-2 \theta e^{-\theta (\rho - 2L)}  }{\left(1+ e^{-\theta (\rho - 2L)} \right)^2}  = - \frac{ \theta \mathrm{sech}^2\left( \theta L - \theta \frac{\rho}{2} \right) }{2} \\
f''(L) &=&  \frac{8 \theta^2 e^{-2\theta (\rho - 2L)}  }{\left(1+ e^{-\theta (\rho - 2L)} \right)^3} - \frac{4 \theta^2 e^{-\theta (\rho - 2L)}  }{\left(1+ e^{-\theta (\rho - 2L)} \right)^2}  =  \theta^2 \mathrm{tanh}\left( \theta L - \theta \frac{\rho}{2} \right) \mathrm{sech}^2\left( \theta L - \theta \frac{\rho}{2} \right)  \\
f(0) &=&  \frac{1}{1+ e^{-\theta \rho }} \\
f'(0) &=&  \frac{-2 \theta e^{-\theta \rho }  }{\left(1+ e^{-\theta \rho } \right)^2}  = - \frac{ \theta  }{2} \mathrm{sech}^2\left( - \frac{\rho \theta}{2} \right) \\
f''(0) &=&  \frac{8 \theta^2 e^{-2\theta \rho }  }{\left(1+ e^{-\theta \rho } \right)^3} - \frac{4 \theta^2 e^{-\theta \rho}  }{\left(1+ e^{-\theta \rho } \right)^2}  =  \theta^2 \mathrm{tanh}\left( - \frac{ \theta \rho}{2} \right) \mathrm{sech}^2\left( - \frac{ \theta \rho}{2} \right)  \\
f\left( \frac{\rho}{2} \right) &=&  \frac{1}{2} \\
f'\left( \frac{\rho}{2} \right) &=&  - \frac{ \theta  }{2}  \\
f''\left( \frac{\rho}{2} \right) &=&  0  \\
\end{eqnarray*} 

\begin{theorem}
We obtain the following first-order Taylor approximation for the amplitude
  \begin{eqnarray}
\text{Amplitude} &=& \frac{\rho}{2} -  \frac{ \rho - \rho \left( \frac{1 - e^{- \mu \Delta} }{1+ e^{-  \rho \theta } } \right) }{ 1 + e^{-\mu \Delta} -   \frac{ \rho \theta  }{2 } \cdot \mathrm{sech}^2\left( - \frac{\rho \theta}{2} \right)   \cdot ( 1 - e^{- \mu \Delta} )  }  .
\label{amplitude_linear_1_thm_23_equation}
\end{eqnarray}
  
  \begin{proof}
  The first step is to perform a Taylor expansion for the multinomial logit function as a function of $L$.  A first order Taylor expansion yields
  \begin{eqnarray*}
f(L) &=& \frac{e^{- \theta L}}{e^{- \theta L} + e^{-\theta (\rho - L)}} \\
&\approx& f(0) + f'(0) L \\
&=&  \frac{1}{1+ e^{-\theta \rho }}  - \left( \frac{ \theta  }{2} \mathrm{sech}^2\left( - \frac{\rho \theta}{2} \right) \right) \cdot L .
\end{eqnarray*} 

Now we substitute the Taylor approximation for the funcction $f(L)$ into the fixed-point equation and solve for the variable $L$.
This yields 
\begin{eqnarray}
 L &=&   \frac{ \rho - \rho \left( \frac{1 - e^{- \mu \Delta} }{1+ e^{-  \rho \theta } } \right) }{ 1 + e^{-\mu \Delta} -   \frac{ \rho \theta  }{2} \cdot \mathrm{sech}^2\left( - \frac{\rho \theta}{2 } \right)   \cdot ( 1 - e^{- \mu \Delta} )  } , 
\end{eqnarray}
  which implies that the amplitude is given by the following equation 
  \begin{eqnarray}
\text{Amplitude} &=& \frac{\rho}{2 } -  \frac{ \rho - \rho \left( \frac{1 - e^{- \mu \Delta} }{1+ e^{-  \rho \theta  } } \right) }{ 1 + e^{-\mu \Delta} -   \frac{ \rho \theta  }{2 } \cdot \mathrm{sech}^2\left( - \frac{\rho \theta}{2 } \right)   \cdot ( 1 - e^{- \mu \Delta} )  }  .
\end{eqnarray}
This completes the proof.  
  \end{proof}

\end{theorem}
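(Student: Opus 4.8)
The plan is to turn the transcendental fixed-point equation \eqref{subequation} into a \emph{linear} equation in $L$ by linearizing the multinomial logit term, solve that linear equation explicitly, and then read off the amplitude from \eqref{amp}.

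First I would recall that, by the previous theorem, the steady-state minimum $L$ is the non-trivial root of $\rho - L = L e^{-\mu \Delta} + \rho\, f(L)\,(1 - e^{-\mu \Delta})$, where $f(x) = e^{-\theta x}/\!\left(e^{-\theta x} + e^{-\theta(\rho - L)}\right)$, and that $\mathrm{Amplitude} = \rho/2 - L$. Next I would replace $f(L)$ by its first-order Taylor polynomial about the base point $L = 0$, using the values already tabulated above, namely $f(0) = 1/(1 + e^{-\theta \rho})$ and $f'(0) = -\tfrac{\theta}{2}\,\mathrm{sech}^2(-\rho\theta/2)$, so that $f(L) \approx f(0) + f'(0)\,L$. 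Here it is worth a sentence explaining why $L = 0$ is the appropriate center for this expansion — it corresponds to the relevant extreme of the range of admissible minima in the regimes of interest — while the alternative center $\rho/2$ is reserved for a different (second-order) approximation later.

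Substituting this affine approximation into the fixed-point equation makes every term linear in $L$. Collecting the coefficients of $L$ on one side gives
\[
\rho - \rho\,(1 - e^{-\mu \Delta})\,f(0) \;=\; L\Bigl(1 + e^{-\mu \Delta} + \rho\,(1 - e^{-\mu \Delta})\,f'(0)\Bigr),
\]
and dividing through yields the stated closed form for $L$; substituting into $\mathrm{Amplitude} = \rho/2 - L$ and inserting the expressions for $f(0)$ and $f'(0)$ produces exactly \eqref{amplitude_linear_1_thm_23_equation}.

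The algebra here is entirely routine, so I do not expect a real obstacle. The one point that deserves a careful word is the approximation step itself: one should confirm that the linearized equation has a unique relevant root and that it is the correct branch — the trivial solution $L = \rho/2$ of the exact equation is ruled out by the Hopf-bifurcation hypothesis, and since $f$ is analytic with uniformly bounded derivatives the linear solve returns the perturbation of the genuine non-trivial root. A quantitative error estimate, together with the monotonicity and upper-bound claims, is left to the results that follow.
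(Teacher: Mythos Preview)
Your approach is correct and essentially identical to the paper's: linearize $f$ about $L=0$, substitute into the fixed-point equation, solve the resulting linear equation for $L$, and apply $\mathrm{Amplitude}=\rho/2-L$. One small aside to correct: the paper's second-order approximation is also centered at $L=0$, not at $\rho/2$ --- in fact the paper remarks that expanding about $\rho/2$ yields only the trivial (zero-amplitude) solution, so that center is not ``reserved'' for anything.
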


\begin{theorem}
We obtain the following second-order Taylor approximation for the amplitude
  \begin{eqnarray}
\text{Amplitude} &=& \frac{\rho}{2 } - L^* 
\label{quadratic_approximation_thm_24}
\end{eqnarray}
where 
\begin{eqnarray}
 L^* &=& - \frac{1 + e^{-\mu \Delta} + \rho f'(0) (1 - e^{-\mu \Delta})}{\rho f''(0) (1 - e^{- \mu \Delta})} \nonumber\\
&\pm& \frac{\sqrt{(1 + e^{-\mu \Delta} + \rho f'(0)(1 - e^{-\mu \Delta}))^2 - 2  \rho^2 f'(0)(1 - e^{-\mu \Delta})(f(0)(1 - e^{-\mu \Delta}) - 1)}}{\rho f''(0)(1 - e^{-\mu \Delta})} \nonumber.
\end{eqnarray}
and we define 
\begin{eqnarray*}
f(0) &=&  \frac{1}{1+ e^{-\theta \rho }} \\
f'(0) &=&  - \frac{ \theta  }{2} \mathrm{sech}^2\left( - \frac{\rho \theta}{2} \right) \\
f''(0) &=&  \theta^2 \mathrm{tanh}\left( - \frac{ \theta \rho}{2} \right) \mathrm{sech}^2\left( - \frac{ \theta \rho}{2} \right).
\end{eqnarray*}

%Under a second order Taylor expansion for the multinomial logit function, we obtain the following approximation for the minimum of the queue length $L$,
%\begin{eqnarray}
% L &=&  \frac{ \frac{\lambda}{\mu} - \frac{\lambda}{\mu} f(0) (1 - e^{- \mu \Delta})  }{ 1 + e^{-\mu \Delta} + \frac{\lambda}{\mu} f'(0) ( 1 - e^{- \mu \Delta} )  } \\
% &=&  \frac{ \frac{\lambda}{\mu} - \frac{\lambda}{\mu} \frac{1 - e^{- \mu \Delta} }{1+ e^{-  \frac{\lambda \theta }{\mu} } }  }{ 1 + e^{-\mu \Delta} + \frac{\lambda}{\mu} f'(0) ( 1 - e^{- \mu \Delta} )  } \\
% &&- \frac{2\theta e^{-  \frac{\lambda \theta }{\mu} } }{ \left( e^{-  \frac{\lambda \theta }{\mu} }  + 1\right)^2 }
%\end{eqnarray}

 \begin{proof}
  The first step is to perform a Taylor expansion for the multinomial logit function as a function of $L$.  A second order Taylor expansion yields
  \begin{eqnarray*}
f(L) &=& \frac{e^{- \theta L}}{e^{- \theta L} + e^{-\theta (\rho - L)}} \\
&\approx& f(0) + f'(0) L + \frac{f''(0)}{2} L^2 \\
&=&  \frac{1}{1+ e^{-\theta \rho }}  - \left( \frac{ \theta  }{2} \mathrm{sech}^2\left( - \frac{\rho \theta}{2} \right) \right) \cdot L +  \frac{\theta^2}{2} \mathrm{tanh}\left( - \frac{ \theta \rho}{2} \right) \mathrm{sech}^2\left( - \frac{ \theta \rho}{2} \right)  \cdot L^2.
\end{eqnarray*} 

Now we substitute the Taylor approximation for the function $f(L)$ into the fixed-point equation and solve for the variable $L$ using the quadratic formula.  This yields 
\begin{eqnarray}
 L^* &=& - \frac{1 + e^{-\mu \Delta} + \rho f'(0) (1 - e^{-\mu \Delta})}{\rho f''(0) (1 - e^{- \mu \Delta})} \nonumber\\
&\pm& \frac{\sqrt{(1 + e^{-\mu \Delta} + \rho f'(0)(1 - e^{-\mu \Delta}))^2 - 2  \rho^2 f'(0)(1 - e^{-\mu \Delta})(f(0)(1 - e^{-\mu \Delta}) - 1)}}{\rho f''(0)(1 - e^{-\mu \Delta})} \nonumber.
\end{eqnarray}
where we define 
\begin{eqnarray*}
f(0) &=&  \frac{1}{1+ e^{-\theta \rho }} \\
f'(0) &=&  - \frac{ \theta  }{2} \mathrm{sech}^2\left( - \frac{\rho \theta}{2} \right) \\
f''(0) &=&  \theta^2 \mathrm{tanh}\left( - \frac{ \rho \theta }{2} \right) \mathrm{sech}^2\left( - \frac{\rho \theta }{2} \right).
\end{eqnarray*} 
Using the amplitude formula, we complete the proof.  
  \end{proof}

%Moreover, under a second order Taylor expansion for the multinomial logit function, we obtain the following approximation for the minimum of the queue length $L$, 
%\begin{eqnarray}
% L &=& - \frac{1 + e^{-\mu \Delta} + \frac{\lambda}{\mu} f'(0) (1 - e^{-\mu \Delta})}{\frac{\lambda}{\mu} f''(0) (1 - e^{- \mu \Delta})} \nonumber\\
%&\pm& \frac{\sqrt{(1 + e^{-\mu \Delta} + \frac{\lambda}{\mu} f'(0)(1 - e^{-\mu \Delta}))^2 - 2 \left( \frac{\lambda}{\mu} \right)^2 f'(0)(1 - e^{-\mu \Delta})(f(0)(1 - e^{-\mu \Delta}) - 1)}}{\frac{\lambda}{\mu}f''(0)(1 - e^{-\mu \Delta})}
%\end{eqnarray}
%where we define 
%\begin{eqnarray*}
%f(0) &=&  \frac{1}{1+ e^{-\theta \rho }} \\
%f'(0) &=&  - \frac{ \theta  }{2} \mathrm{sech}^2\left( - \frac{\rho \theta}{2} \right) \\
%f''(0) &=&  \theta^2 \mathrm{tanh}\left( - \frac{ \theta \rho}{2} \right) \mathrm{sech}^2\left( - \frac{ \theta \rho}{2} \right).
%\end{eqnarray*} 

 %we have chosen the $+$ from the $\pm$ as the corresponding root performs better numerically.
%\begin{eqnarray}
% L &=&  \frac{ \frac{\lambda}{\mu} - \frac{\lambda}{\mu} f(0) (1 - e^{- \mu \Delta})  }{ 1 + e^{-\mu \Delta} + \frac{\lambda}{\mu} f'(0) ( 1 - e^{- \mu \Delta} )  } \\
% &=&  \frac{ \frac{\lambda}{\mu} - \frac{\lambda}{\mu} \frac{1 - e^{- \mu \Delta} }{1+ e^{-  \frac{\lambda \theta }{\mu} } }  }{ 1 + e^{-\mu \Delta} + \frac{\lambda}{\mu} f'(0) ( 1 - e^{- \mu \Delta} )  } \\
% &&- \frac{2\theta e^{-  \frac{\lambda \theta }{\mu} } }{ \left( e^{-  \frac{\lambda \theta }{\mu} }  + 1\right)^2 }
%\end{eqnarray}

\end{theorem}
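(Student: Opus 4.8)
The plan is to convert the transcendental fixed-point equation \eqref{subequation} of the preceding theorem into an honest polynomial equation by replacing the multinomial logit term $f(L)$ with its second-order Taylor polynomial about $L=0$, and then to read off $L^*$ from the quadratic formula. First I would substitute $f(L)\approx f(0)+f'(0)L+\tfrac12 f''(0)L^2$ into \eqref{subequation}, using the closed forms of $f(0)$, $f'(0)$, $f''(0)$ compiled immediately before the statement. It matters that the expansion is centered at $L=0$ rather than at the equilibrium value $L=\rho/2$: since $f''(\rho/2)=0$, expanding at $\rho/2$ would degenerate the quadratic back into a linear equation and merely reproduce the first-order approximation.

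The second step is pure bookkeeping. After the substitution I would move all terms to one side and collect by powers of $L$; writing $a:=e^{-\mu\Delta}$, this yields a quadratic $AL^2+BL+C=0$ with
\[
A=\rho(1-a)f''(0),\qquad B=2\bigl(1+a+\rho(1-a)f'(0)\bigr),\qquad C=2\rho\bigl((1-a)f(0)-1\bigr).
\]
Applying the quadratic formula produces the two candidate values $L^*$ displayed in the statement, and feeding the admissible one into the amplitude identity $\mathrm{Amplitude}=\rho/2-L^*$ from the previous theorem completes the derivation. Everything up to this point is a routine, if slightly tedious, rearrangement.

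The one genuinely delicate issue — the \emph{main obstacle} — is the choice of sign in the $\pm$. The original fixed-point equation always carries the spurious root $L=\rho/2$ (the non-oscillating equilibrium), and the Taylor-reduced quadratic likewise has two roots, only one of which tracks the true steady-state minimum. I would isolate the correct branch from the structural constraints on $L$: after the Hopf bifurcation one has $0<L<\rho/2$, so that $\mathrm{Amplitude}=\rho/2-L>0$, with $L\uparrow\rho/2$ as $\Delta\downarrow\Delta_{\mathrm{cr}}$; the admissible root is then the one lying in $(0,\rho/2)$ that collapses to the first-order formula of the previous theorem in the small-amplitude limit (formally, letting $f''(0)\to 0$ sends $AL^2+BL+C=0$ to $BL+C=0$, whose solution $-C/B$ is exactly the first-order $L$, so the right branch is the one with this finite limit rather than the one that blows up). A short sign check on $A$, $B$, $C$, using $f(0)\in(0,1)$ together with $f'(0)<0$ and $f''(0)<0$ for $\rho\theta>0$, pins the branch down. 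The quantitative accuracy of the resulting closed form — the paper observes it is essentially exact for large $\Delta$ — is not part of this derivation and would be documented separately by comparison with the numerical solution of \eqref{subequation}.
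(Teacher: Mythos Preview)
Your proposal is correct and follows essentially the same route as the paper: Taylor-expand $f(L)$ about $L=0$ to second order, substitute into the fixed-point equation \eqref{subequation}, collect into a quadratic in $L$, and apply the quadratic formula together with $\mathrm{Amplitude}=\rho/2-L^*$. Your discussion of the branch selection is actually more careful than the paper's, which simply records the observation that the $+$ root performs better numerically without the limiting and sign analysis you outline.
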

 We have observed that the $+$ from the $\pm$ as the corresponding root performs better numerically than the $-$.
 
 \begin{remark}
 One might ask whether the Taylor expansion around $0$ is the right Taylor expansion to do.  We have attempted to do a Taylor expansion around the equilibrium point $\frac{\rho}{2 }$ and find that the expansion yields 
 \begin{eqnarray*}
f\left( \frac{\rho}{2 } \right) &=&  \frac{1}{2} \\
f'\left( \frac{\rho}{2 } \right) &=&  - \frac{ \theta  }{2}  \\
f''\left( \frac{\rho}{2 } \right) &=&  0  .
\end{eqnarray*} 
This seems promising since the second derivative around the equilibrium point is equal to zero.  However, what we find is that the Taylor expansion around the equilibrium only yields the equilibrium solution, which is an amplitude of zero.  Thus, the Taylor expansion around the equilibrium does not yield any information about the amplitude.  
 \end{remark}

\begin{theorem}
When the two-dimensional update system exhibits a nonzero amplitude, the first-order Taylor approximation for the amplitude is an upper bound for the actual amplitude (which is obtained by solving the nonlinear fixed-point equation). In particular, if we let $A_1$ denote the first-order Taylor approximation of the amplitude and $A$ denote the actual amplitude, then if $A > 0$ we have that $$A_1 \geq A.$$
\end{theorem}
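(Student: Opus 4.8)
The plan is to reduce the inequality to a comparison of two scalars. By the amplitude formula $A = \tfrac{\rho}{2} - L$ and the definition of the first-order approximation $A_1 = \tfrac{\rho}{2} - L_1$, where $L_1$ is the root displayed in the first-order Taylor theorem, the claim $A_1 \geq A$ is \emph{equivalent} to $L_1 \leq L$. So I would set $s := e^{-\mu\Delta} \in (0,1)$, write $f(x) = \frac{1}{1+e^{-\theta(\rho - 2x)}}$ for the (simplified) multinomial logit function, and introduce the two rearranged fixed-point maps
\[
h(x) := (1+s)\,x + \rho(1-s)\,f(x) - \rho, \qquad h_1(x) := (1+s)\,x + \rho(1-s)\bigl(f(0) + f'(0)\,x\bigr) - \rho .
\]
Equation \eqref{subequation} says exactly that $h(L) = 0$, and the closed form for $L_1$ says exactly that $h_1(L_1) = 0$; the trivial root $x = \tfrac{\rho}{2}$ of $h$ is excluded because $A>0$. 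So it suffices to prove $L_1 \le L$.

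Next I would exploit concavity of $f$. Since the amplitude is positive, $U > L$ together with $U + L = \rho$ forces $L < \tfrac{\rho}{2}$. From the formula $f''(x) = \theta^2 \tanh\!\bigl(\theta x - \tfrac{\theta\rho}{2}\bigr)\mathrm{sech}^2\!\bigl(\theta x - \tfrac{\theta\rho}{2}\bigr)$ computed in the paper, $f'' < 0$ on $(-\infty, \tfrac{\rho}{2})$, so $f$ is concave there; as both $0$ and $L$ lie in that interval, the tangent line of $f$ at $0$ lies weakly above $f$, i.e.\ $f(L) \le f(0) + f'(0) L$. Multiplying by $\rho(1-s) > 0$ and adding $(1+s)L - \rho$ gives $h(L) \le h_1(L)$, hence $h_1(L) \ge h(L) = 0$.

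Then I would use that $h_1$ is \emph{affine}: its slope is the constant
\[
a := (1+s) + \rho(1-s) f'(0) = (1+s) - \tfrac{\rho\theta}{2}\mathrm{sech}^2\!\bigl(\tfrac{\rho\theta}{2}\bigr)(1-s),
\]
which is precisely the denominator appearing in the closed form for $L_1$. Writing $c := \tfrac{\rho\theta}{2}\mathrm{sech}^2(\tfrac{\rho\theta}{2}) > 0$ we have $a = (1-c) + s(1+c)$, so $a>0$ as soon as $c < 1$. This I would establish uniformly in the parameters via $\cosh^2 y = \tfrac{1+\cosh 2y}{2} \ge 1 + y^2 > y$ for every $y>0$, which gives $y\,\mathrm{sech}^2 y < 1$ and hence $c<1$. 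Thus $h_1$ is strictly increasing, and $h_1(L_1) = 0 \le h_1(L)$ yields $L_1 \le L$, i.e.\ $A_1 \ge A$ (and, as a byproduct, $L_1 < \tfrac{\rho}{2}$, so $A_1 > 0$ as well).

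The main obstacle is the last step: showing the slope $a$ is positive, equivalently that the denominator in the closed-form expression for $L_1$ never vanishes or changes sign in the relevant regime. The bound $c = \tfrac{\rho\theta}{2}\mathrm{sech}^2(\tfrac{\rho\theta}{2}) < 1$ handles this, and the inequality $\cosh^2 y > y$ is the clean way to see it; a slightly sharper analysis even gives $c < \tfrac12$, but $c<1$ is all that is needed. A secondary point requiring care is the identification of ``the steady-state minimum'' as the root $L$ with $L < \tfrac{\rho}{2}$ — this is what places $L$ inside the concavity region of $f$ and is guaranteed precisely by the hypothesis that the amplitude is nonzero.
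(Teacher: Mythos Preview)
Your proof is correct and shares the paper's core strategy: reduce $A_1 \ge A$ to $L_1 \le L$, then use concavity of the logit function $f$ on $(-\infty,\rho/2)$ to bound $f(L)$ above by its tangent line at $0$. The only difference is in how the final comparison is closed. The paper rewrites the fixed-point equation as $C_1 + C_2 L = f(L)$ with $C_1 = \frac{1}{1-s}$, $C_2 = -\frac{1+s}{\rho(1-s)}$, and argues geometrically: since $\ell(L) := C_1 + C_2 L$ satisfies $\ell(0) > f(0)$ while $\ell(L^*) = f(L^*) \le f(0) + f'(0)L^*$, the affine function $\ell(L) - (f(0) + f'(0)L)$ changes sign on $[0,L^*]$, placing its unique root $L_1^*$ in that interval. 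You instead show directly that $h_1$ is strictly increasing by proving its slope $a = (1+s) + \rho(1-s)f'(0)$ is positive, via the clean inequality $y\,\mathrm{sech}^2 y < 1$ (i.e.\ $\cosh^2 y > y$). Your route needs this extra lemma but yields a self-contained monotonicity argument and the by-product $A_1 > 0$; the paper's route is shorter and never needs the sign of $a$, but implicitly uses $L^* > 0$.
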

\begin{proof}
First consider the multinomial logit function $$f(x) = \frac{1}{1 + e^{-\theta(\rho - 2x)}}$$ which is the function in the nonlinear fixed-point equation that we approximate with a first-order Taylor expansion to get our first-order approximation. The second derivative of this function is given by $$f''(x) = \theta^2 \mathrm{tanh}\left( \theta x - \theta \frac{\rho}{2} \right) \mathrm{sech}^2\left( \theta x - \theta \frac{\rho}{2} \right)  $$ and we see that $f''(x) = 0$ precisely when $x = \frac{\rho}{2}$. Additionally, $f''(x) < 0$ when $x < \frac{\rho}{2}$ and $f''(x) > 0$ when $x > \frac{\rho}{2}$.  This tells us that this function is concave when $x < \frac{\rho}{2}$ and convex when $x > \frac{\rho}{2}$. In our fixed-point equation for $L$, we have an $f(L)$ term that we ultimately take a first-order Taylor expansion of to get our first-order approximation. Recall that the nonlinear fixed-point equation is 
\begin{eqnarray}
\rho - L &=& L e^{-\mu \Delta} + \rho f(L) (1 - e^{- \mu \Delta})
\end{eqnarray}
\noindent and we can rewrite it as 
\begin{eqnarray}
C_1 + C_2 L &=& f(L)
\end{eqnarray}
\noindent where $$C_1 = \frac{1}{1 - e^{-\mu \Delta}}$$ and $$C_2 = -\frac{1 + e^{-\mu \Delta}}{\rho \left( 1 - e^{-\mu \Delta}  \right)}.$$ Note that since $\mu, \Delta, \rho > 0$, it follows that $C_2 < 0$, so $C_1 + C_2 L$ as a function of $L$ is a line with negative slope. Recall that $U + L = \rho$ and that $L \leq U$ so that $L \leq \frac{\rho}{2}$. If $L = U = \frac{\rho}{2}$, then the solution is stable and the amplitude is $0$, so we instead consider the case where $L < \frac{\rho}{2}$ which corresponds to the system having a nonzero ampltiude. In this case, we see that $f$ is concave and thus $$f(L) \leq f(0) + f'(0) L.$$ Our nonlinear fixed-point equation is $$C_1 + C_2 L = f(L)$$ and the fixed-point equation corresponding to the first-order Taylor approximation is $$C_1 + C_2 L = f(0) + f'(0) L.$$ Since $C_1 + C_2 L$ is a line with negative slope, it will intersect with a strictly larger function before it intersects with a strictly smaller function and thus if $L^*$ is the solution to the nonlinear fixed-point equation and $L_1^*$ is the solution to the first-order Taylor approximation fixed-point equation, then we have that $$L_1^* \leq L^*$$ where we note that since $f'(0) \neq C_2$,  we have that $L_1^*$ exists and is unique. The corresponding approximations for the upper value of the amplitude are $U^* := \rho - L^*$ and $U_1^* := \rho - L_1^*$, respectively, and so it follows that $$U_1^* \geq U^*$$ and so $$A_1 = \frac{U_1^* - L_1^*}{2} \geq \frac{U^* - L^*}{2} = A.$$
\end{proof}

\noindent Below, in Figures \ref{ex1}, \ref{ex2}, and \ref{ex3}, we show examples of our amplitude approximations being applied to the two-dimensional update system for various values of $\Delta$. Additionally, we collect data on the amplitude approximations for several values of $\Delta$ in Table \ref{delta_approx_table}

\begin{figure}
\centering 
\includegraphics[scale=.3]{./Figures/FP_ex2_hq.jpg}~\includegraphics[scale=.3]{./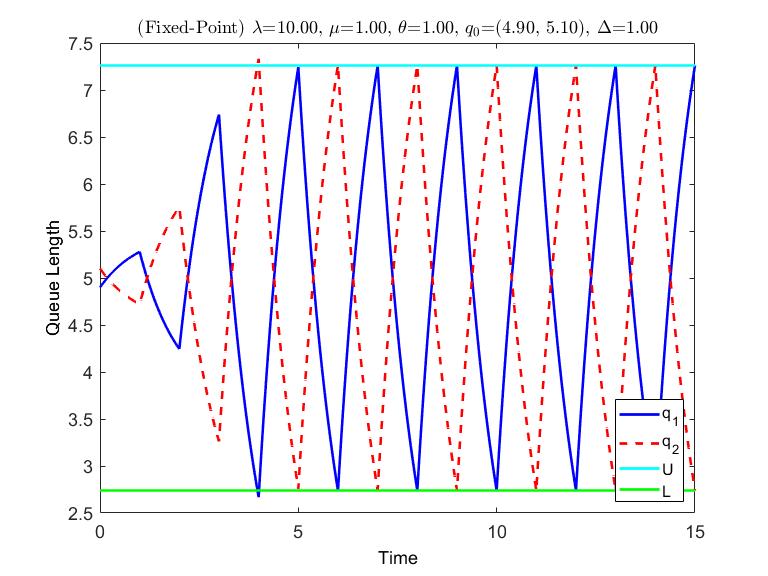}
\caption{The queue lengths plotted against time where the horizontal lines represent our approximations of the amplitude of the oscillations by numerically solving the nonlinear fixed-point equation for $L$. These plots are for $\lambda = 10, \mu = 1, \theta=1$, and for $t \in [- \Delta, 0]$ we have $q_1 = .49$ and $q_2 = .51.$ In the left plot $\Delta=.7$ and in the right plot $\Delta = 1$.}
\label{ex1}
\end{figure}

\begin{figure}
\centering 
\includegraphics[scale=.3]{./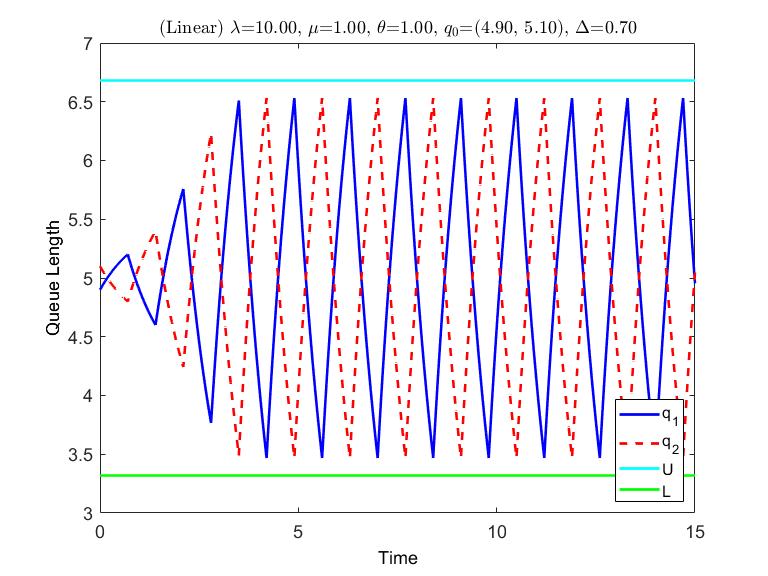}~\includegraphics[scale=.3]{./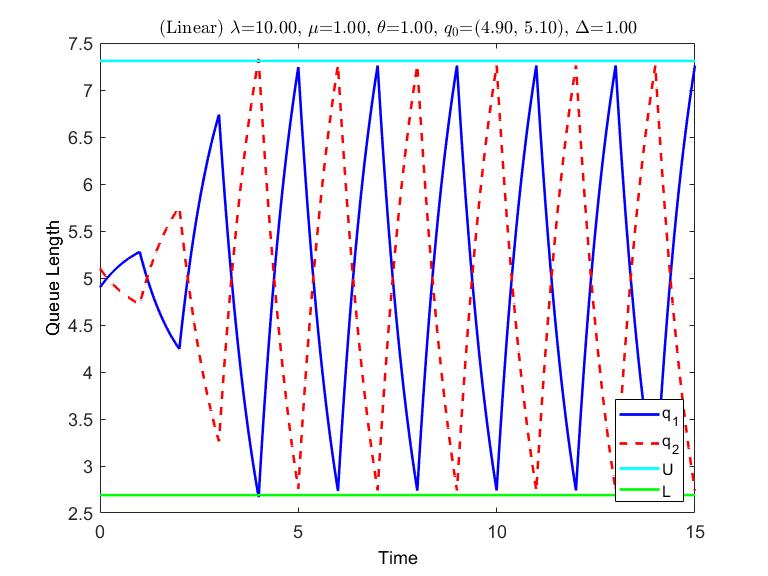}
\caption{The queue lengths plotted against time where the horizontal lines represent our approximations of the amplitude of the oscillations by numerically solving the first-order Taylor approximation fixed-point equation for $L$. These plots are for $\lambda = 10, \mu = 1, \theta=1$, and for $t \in [- \Delta, 0]$ we have $q_1 = .49$ and $q_2 = .51.$ In the left plot $\Delta=.7$ and in the right plot $\Delta = 1$.}
\label{ex2}
\end{figure}

\begin{figure}
\centering 
\includegraphics[scale=.3]{./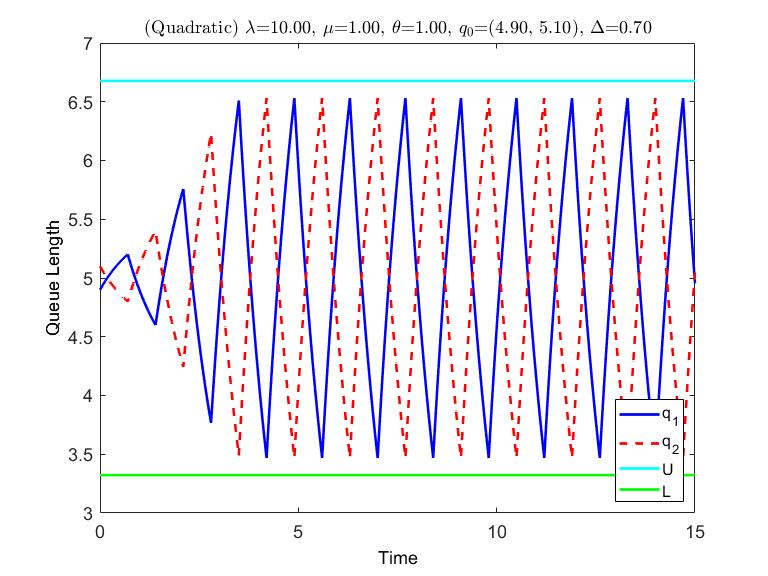}~\includegraphics[scale=.3]{./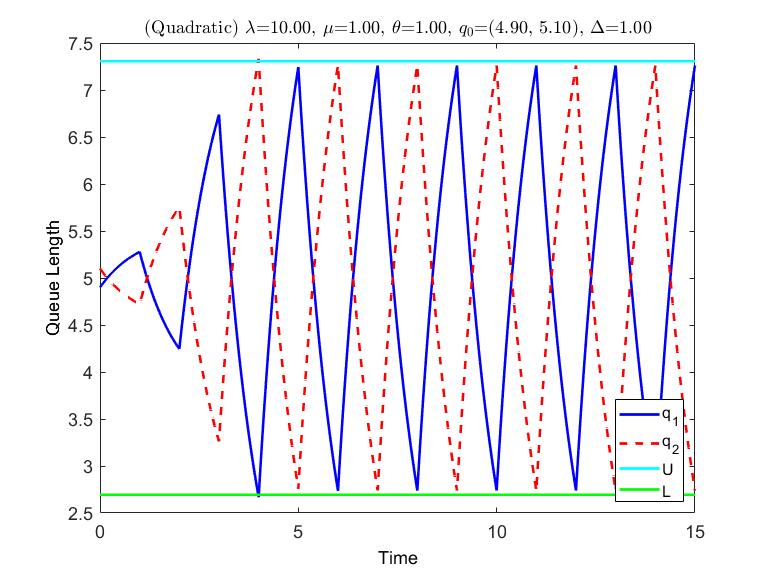}
\caption{The queue lengths plotted against time where the horizontal lines represent our approximations of the amplitude of the oscillations by numerically solving the second-order Taylor approximation fixed-point equation for $L$. These plots are for $\lambda = 10, \mu = 1, \theta=1$, and for $t \in [- \Delta, 0]$ we have $q_1 = .49$ and $q_2 = .51.$ In the left plot $\Delta=.7$ and in the right plot $\Delta = 1$.}
\label{ex3}
\end{figure}

%%%%%%%%%%%%%%% CHANGE THESE FIGURES! Not sure how I want to structure it yet
%\begin{figure}
%\includegraphics[scale=.6]{./Figures/ex1.jpg}
%\caption{The queue lengths plotted against time for where the horizontal lines represent our approximations of the amplitude of the oscillations. This plot is for $\lambda = 10, \mu = 1, \theta=1, \Delta=.45$, and for $t \in [- \Delta, 0]$ we have $q_1 = .49$ and $q_2 = .51.$}
%\label{ex1}
%\end{figure}
%
%\begin{figure}
%\includegraphics[scale=.6]{./Figures/ex2.jpg}
%\caption{The queue lengths plotted against time for where the horizontal lines represent our approximations of the amplitude of the oscillations. This plot is for $\lambda = 10, \mu = 1, \theta=1, \Delta=.7$, and for $t \in [- \Delta, 0]$ we have $q_1 = .49$ and $q_2 = .51.$}
%\label{ex2}
%\end{figure}
%
%\begin{figure}
%\includegraphics[scale=.6]{./}
%\caption{The queue lengths plotted against time for where the horizontal lines represent our approximations of the amplitude of the oscillations. This plot is for $\lambda = 10, \mu = 1, \theta=1, \Delta=1$, and for $t \in [- \Delta, 0]$ we have $q_1 = .49$ and $q_2 = .51.$}
%\label{ex3}
%\end{figure}

\begin{table}
\begin{center}
\begin{tabular}{ |c | c | c | c| }
\hline
 $\Delta$ & Fixed-Point & Linear & Quadratic \\ 
\hline
0.60 & 1.2253 & 1.4555 & 1.4522\\ 
\hline
0.80 & 1.7984 & 1.8985 & 1.8952\\ 
\hline
1.00 & 2.2609 & 2.3092 & 2.3062\\ 
\hline
1.20 & 2.6590 & 2.6839 & 2.6813\\ 
\hline
1.40 & 3.0071 & 3.0205 & 3.0183\\ 
\hline
1.60 & 3.3114 & 3.3189 & 3.3172\\ 
\hline
1.80 & 3.5759 & 3.5802 & 3.5789\\ 
\hline
2.00 & 3.8042 & 3.8068 & 3.8058\\ 
\hline
2.20 & 3.9998 & 4.0014 & 4.0007\\ 
\hline
2.40 & 4.1663 & 4.1673 & 4.1667\\ 
\hline
2.60 & 4.3071 & 4.3077 & 4.3073\\ 
\hline
2.80 & 4.4255 & 4.4259 & 4.4256\\ 
\hline
3.00 & 4.5247 & 4.5249 & 4.5248\\ 
\hline
\end{tabular}
\end{center}
\caption{The values of the amplitude of the oscillations in the two-delay update system for various values of $\Delta$ according to three different approximations. The Fixed-Point column corresponds to numerically solving the nonlinear fixed-point equation for $L$ and then computing $\frac{\lambda}{2 \mu} - L$ to obtain the amplitude. The Linear column uses a first-order Taylor expansion of the multinomial logit function in the fixed-point equation before solving for $L$ and the Quadratic column does the same except with a second-order Taylor expansion. Other parameters used were $\lambda=10$, $\mu=1$, and $\theta=1$ for all cases.}
\label{delta_approx_table}
\end{table}

%%%%%%%%%%%%%%%%%%%%%%%%%%%%%%%%%%%%%%%%%%%%%%%%%%%%%%%%%%%%%%%%%%%
\section{The Multi-dimensional Case}
\label{multidimensional_case_section}

In the N-dimensional case, we have the following system of functional delay differential equations
\begin{eqnarray} \label{fdesN}
\overset{\bullet}{q}_i(t) = \lambda \cdot \frac{\exp \left( - \theta \cdot q_i(\Phi(t, \Delta)) \right) }{\sum_{j=1}^{N} \exp\left( -\theta \cdot q_j( \Phi(t, \Delta) \right)} - \mu q_i(t), \hspace{5mm} i = 1, 2, ... , N.
\end{eqnarray}

\noindent where $$\Phi(t, \Delta) := \bigg\lfloor \frac{t}{\Delta} \bigg\rfloor \Delta.$$
Like in the two-dimensional case, the amplitude can be obtained by solving a system of 2N nonlinear equations i.e.
%Like in the two-dimensional case, the solution to the amplitude must solve a system of 2N nonlinear equations i.e.

 \begin{eqnarray}  \label{UequationN}
U_i &=&  L_i e^{- \mu \Delta} + \rho \frac{e^{-\theta \cdot L_i}}{ \sum^{N}_{j=1} e^{-\theta \cdot L_j} } (1 - e^{- \mu \Delta}) \\
L_i &=&  U_i e^{- \mu \Delta} + \rho \frac{e^{-\theta \cdot U_i}}{ \sum^{N}_{j=1} e^{-\theta \cdot U_j} } (1 - e^{- \mu \Delta}). 
\end{eqnarray}

Unfortunately, just like in the two-dimensional case, we cannot find an explicit closed-form solution to the system of equations.  Moreover, what is also true is that the upper and lower intrepretations of the $U_i$ and $L_i$ no longer apply in the case where $N > 2$.  This can be observed in the example given in Figure \ref{3D_example}.  

\begin{figure}
\includegraphics[scale=.4]{./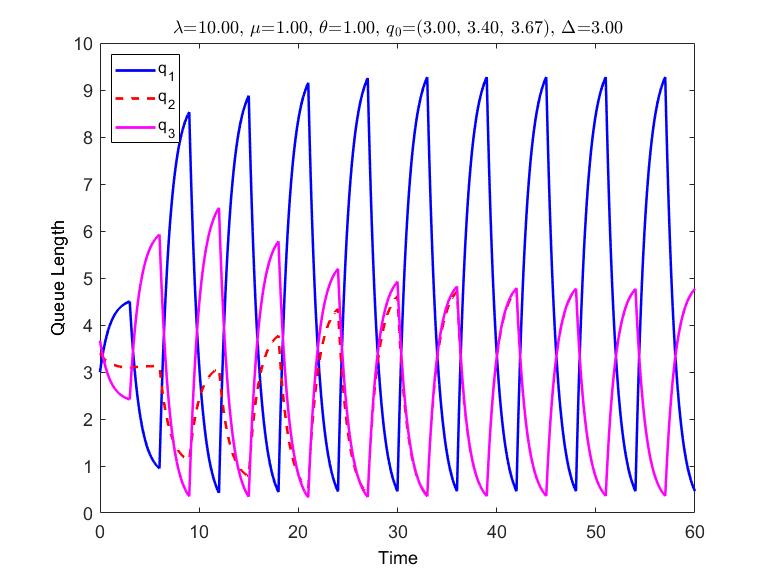}
\caption{A plot of the queue lengths for the update system with $N=3$ against time. We see that the first queue, $q_1$, approaches a different limiting amplitude than the other two queues do.}
\label{3D_example}
\end{figure}

\subsection{The Even Number of Queues Case}

In this case, we can reduce the problem back to the two dimensional case.  We observe from our numerical examples that the even dimensional case always reduces to a setting where the amplitudes are all the same size for all queues. In particular there are two sets of queues, each containing $\frac{N}{2}$ identically-behaving (approximately, for large time) queues, and both sets of queues have the same amplitude but they are out of phase with each other. In this case, we can map this to the two dimensional case where the arrival rate is suppressed.  This implies that 

\begin{eqnarray}
\frac{\lambda^*}{\mu} - L = L e^{-\mu \Delta} + \frac{\lambda^*}{\mu} \frac{e^{-\theta \cdot L}}{e^{-\theta \cdot L} + e^{- \theta \cdot (\frac{\lambda^*}{\mu} - L)}}(1 - e^{- \mu \Delta}). \label{subequation}
\end{eqnarray}
where $\lambda^* = 2 \lambda/N$. This factor of $\frac{2}{N}$ is applied to the original arrival rate $\lambda$ because the arrival rate is shared equally amongst all $\frac{N}{2}$ queues in each set of queues.

Moreover, the Taylor expansion approximations can also be modified by replacing $\lambda$ with $\lambda^*$ to compute the amplitude.  In Tables \ref{delta_approx_table_N_4}-\ref{delta_approx_table_N_8} we compare the amplitude obtained by numerically solving the fixed-point equation \ref{subequation} (after scaling the arrival rate by $\frac{2}{N}$, as mentioned above) with the corresponding linear and quadratic approximations introduced in Equation \ref{amplitude_linear_1_thm_23_equation} and Equation \ref{quadratic_approximation_thm_24}, respectively. We compare the amplitude with the linear and quaratic approximations for several values of $\Delta$ and we do so for $N=4, N=6$, and $N=8$. In each of these tables, the Fixed-Point column corresponds to the amplitude obtained by numerically solving the nonlinear fixed-point equation for $L$ and then computing $\frac{\lambda^*}{2 \mu} - L$ to obtain the amplitude. The Linear column uses a first-order Taylor expansion of the multinomial logit function in the fixed-point equation before solving for $L$ and the Quadratic column does the same except with a second-order Taylor expansion. 

From the tables, we see that the linear approximation appears to get more accurate as $\Delta$ increases and less accurate as $N$ increases. While the quadratic approximation is more accurate than the linear approximation in some cases, there are many cases for which the linear approximation is still more accurate. Overall, the linear approximation appears to be more reliable than the quadratic approximation. 

%%%%%%%%%%%%%%%%%%%%%%%%%%%%%%%

\begin{table}[H]
\begin{center}
\begin{tabular}{ |c | c | c | c| }
\hline
 $\Delta$ & Fixed-Point & Linear & Quadratic \\ 
\hline
0.90 & 0.4075 & 0.9973 & 0.9263\\ 
\hline
1.10 & 0.8904 & 1.1898 & 1.1251\\ 
\hline
1.30 & 1.1847 & 1.3658 & 1.3102\\ 
\hline
1.50 & 1.4088 & 1.5243 & 1.4786\\ 
\hline
1.70 & 1.5896 & 1.6651 & 1.6288\\ 
\hline
1.90 & 1.7386 & 1.7888 & 1.7606\\ 
\hline
2.10 & 1.8625 & 1.8961 & 1.8748\\ 
\hline
2.30 & 1.9657 & 1.9885 & 1.9725\\ 
\hline
2.50 & 2.0518 & 2.0673 & 2.0555\\ 
\hline
2.70 & 2.1235 & 2.1340 & 2.1254\\ 
\hline
2.90 & 2.1831 & 2.1903 & 2.1840\\ 
\hline
3.10 & 2.2325 & 2.2375 & 2.2329\\ 
\hline
3.30 & 2.2734 & 2.2769 & 2.2736\\ 
\hline
\end{tabular}
\quad
\begin{tabular}{ |c | c | c | c| }
\hline
 $\Delta$ & Fixed-Point & Linear & Quadratic \\ 
\hline
1.40 & 0.1479 & 0.8050 & 0.6458\\ 
\hline
1.60 & 0.5719 & 0.9007 & 0.7603\\ 
\hline
1.80 & 0.7775 & 0.9875 & 0.8679\\ 
\hline
2.00 & 0.9235 & 1.0650 & 0.9657\\ 
\hline
2.20 & 1.0354 & 1.1333 & 1.0521\\ 
\hline
2.40 & 1.1240 & 1.1930 & 1.1270\\ 
\hline
2.60 & 1.1952 & 1.2445 & 1.1910\\ 
\hline
2.80 & 1.2529 & 1.2885 & 1.2451\\ 
\hline
3.00 & 1.3000 & 1.3260 & 1.2904\\ 
\hline
3.20 & 1.3384 & 1.3576 & 1.3281\\ 
\hline
3.40 & 1.3698 & 1.3842 & 1.3594\\ 
\hline
3.60 & 1.3956 & 1.4065 & 1.3853\\ 
\hline
3.80 & 1.4167 & 1.4250 & 1.4067\\ 
\hline
\end{tabular}
\end{center}
\caption{$N = 4$ (Left) and  $N = 6$ (Right) \\ $\lambda=10$, $\mu=1$, and $\theta=1$}
%\caption{The values of the amplitude of the oscillations in the (N=4) update system for various values of $\Delta$ according to three different approximations. The Fixed-Point column corresponds to numerically solving the nonlinear fixed-point equation for $L$ and then computing $\frac{\lambda}{4 \mu} - L$ to obtain the amplitude. The Linear column uses a first-order Taylor expansion of the multinomial logit function in the fixed-point equation before solving for $L$ and the Quadratic column does the same except with a second-order Taylor expansion. Other parameters used were $\lambda=10$, $\mu=1$, and $\theta=1$ for all cases.}
\label{delta_approx_table_N_4}
\end{table}

\begin{table}[H]
\begin{center}
\begin{tabular}{ |c | c | c | c| }
\hline
 $\Delta$ & Fixed-Point & Linear & Quadratic \\ 
\hline
2.20 & 0.0433 & 0.5980 & 0.0555\\ 
\hline
2.40 & 0.3568 & 0.6393 & 0.1327\\ 
\hline
2.60 & 0.4848 & 0.6760 & 0.1934\\ 
\hline
2.80 & 0.5718 & 0.7083 & 0.2416\\ 
\hline
3.00 & 0.6363 & 0.7363 & 0.2799\\ 
\hline
3.20 & 0.6858 & 0.7605 & 0.3107\\ 
\hline
3.40 & 0.7248 & 0.7811 & 0.3355\\ 
\hline
3.60 & 0.7557 & 0.7985 & 0.3555\\ 
\hline
3.80 & 0.7806 & 0.8132 & 0.4367\\ 
\hline
4.00 & 0.8006 & 0.8256 & 0.5038\\ 
\hline
4.20 & 0.8168 & 0.8359 & 0.5454\\ 
\hline
4.40 & 0.8299 & 0.8445 & 0.5757\\ 
\hline
4.60 & 0.8406 & 0.8516 & 0.5989\\ 
\hline
\end{tabular}
\end{center}
\caption{$N = 8, \lambda=10$, $\mu=1$, and $\theta=1$}
%\caption{The values of the amplitude of the oscillations in the (N=8) update system for various values of $\Delta$ according to three different approximations. The Fixed-Point column corresponds to numerically solving the nonlinear fixed-point equation for $L$ and then computing $\frac{\lambda}{8 \mu} - L$ to obtain the amplitude. The Linear column uses a first-order Taylor expansion of the multinomial logit function in the fixed-point equation before solving for $L$ and the Quadratic column does the same except with a second-order Taylor expansion. Other parameters used were $\lambda=10$, $\mu=1$, and $\theta=1$ for all cases.}
\label{delta_approx_table_N_8}
\end{table}

Next we consider various plots of queue length against time in Figures \ref{EVEN_PLOTS_1}-\ref{EVEN_PLOTS_9} for $N=4, N=6,$ and $N= 8$. In Figure \ref{EVEN_PLOTS_1} we plot queue length against time for all four of the queues in the $N=4$ update system. We see that there are two sets of two queues where the queues in different sets all have the same amplitude and only differ by a phase shift. In Figure \ref{EVEN_PLOTS_2} we plot amplitude bars obtained from solving the fixed-point equation and in Figure \ref{EVEN_PLOTS_3} we also plot amplitude bars obtained from the linear amplitude approximation.

In Figure \ref{EVEN_PLOTS_4} we plot queue length against time for all four of the queues in the $N=6$ update system. We see that there are two sets of three queues where the queues in different sets all have the same amplitude and only differ by a phase shift. In Figure \ref{EVEN_PLOTS_5} we plot amplitude bars obtained from solving the fixed-point equation and in Figure \ref{EVEN_PLOTS_6} we also plot amplitude bars obtained from the linear amplitude approximation.

 In Figure \ref{EVEN_PLOTS_7} we plot queue length against time for all four of the queues in the $N=8$ update system. We see that there are two sets of four queues where the queues in different sets all have the same amplitude and only differ by a phase shift. In Figure \ref{EVEN_PLOTS_8} we plot amplitude bars obtained from solving the fixed-point equation and in Figure \ref{EVEN_PLOTS_9} we also plot amplitude bars obtained from the linear amplitude approximation.

Overall, we see that the plots reiterate the conclusion made from the tables that the linear amplitude approximation tends to get worse as $N$ increases as all of the plots have the same value of $\Delta = 3$ and the amplitude approximation is considerably worse in the $N = 8$ case than in the other cases.

%%%%%%%%%%%%%%% plots (even case)

%%%%% N = 4

%\subsection{$N = 4$}
\begin{figure}
\includegraphics[scale=.65 ]{./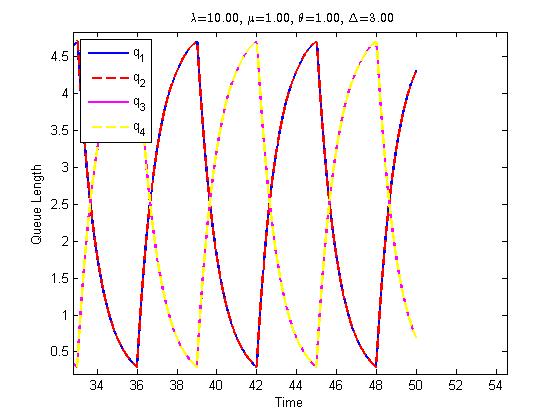}
\caption{$N=4$}
\label{EVEN_PLOTS_1}
\end{figure}

\begin{figure}
\includegraphics[scale=.65 ]{./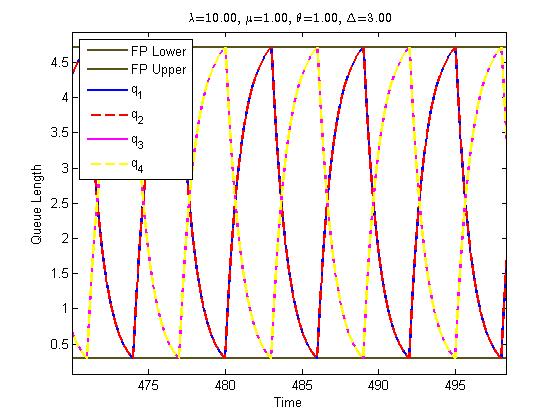}
\caption{$N=4$}
\label{EVEN_PLOTS_2}
\end{figure}

\begin{figure}
\includegraphics[scale=.65 ]{./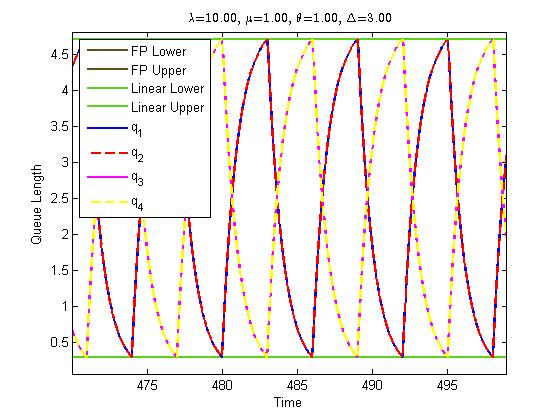}
\caption{$N=4$}
\label{EVEN_PLOTS_3}
\end{figure}

%%%%% N = 6

%\subsection{$N = 6$}
\begin{figure}
\includegraphics[scale=.65 ]{./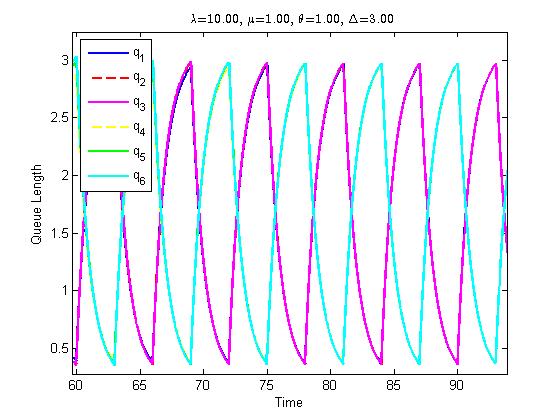}
\caption{$N=6$}
\label{EVEN_PLOTS_4}
\end{figure}

\begin{figure}
\includegraphics[scale=.65 ]{./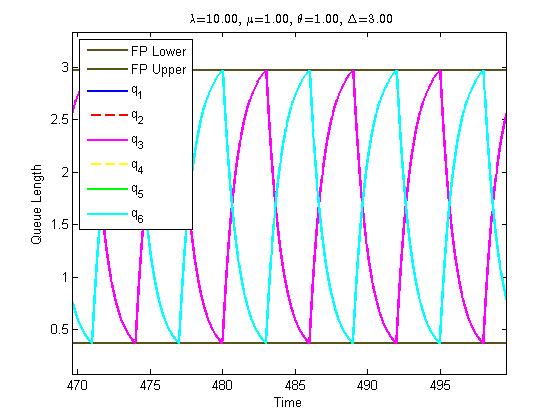}
\caption{$N=6$}
\label{EVEN_PLOTS_5}
\end{figure}

\begin{figure}
\includegraphics[scale=.65 ]{./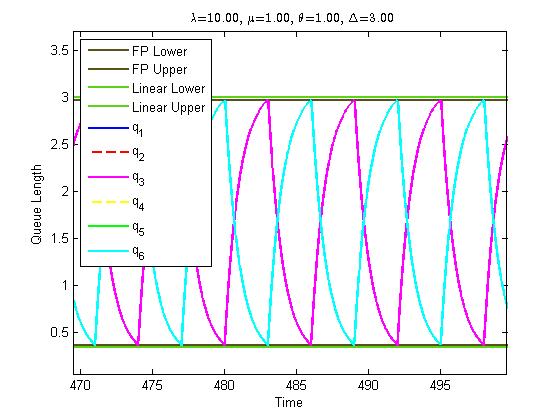}
\caption{$N=6$}
\label{EVEN_PLOTS_6}
\end{figure}

%%%%% N = 8

%\subsection{$N = 8$}
\begin{figure}
\includegraphics[scale=.65 ]{./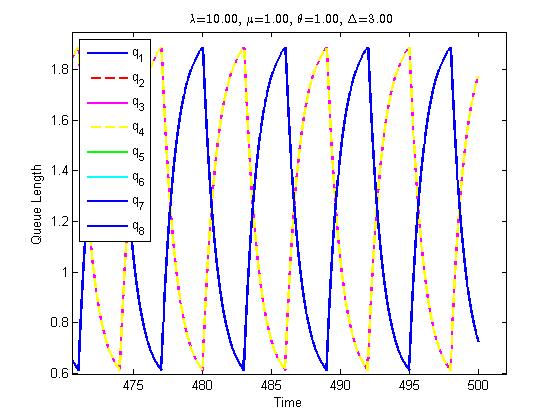}
\caption{$N=8$}
\label{EVEN_PLOTS_7}
\end{figure}

\begin{figure}
\includegraphics[scale=.65 ]{./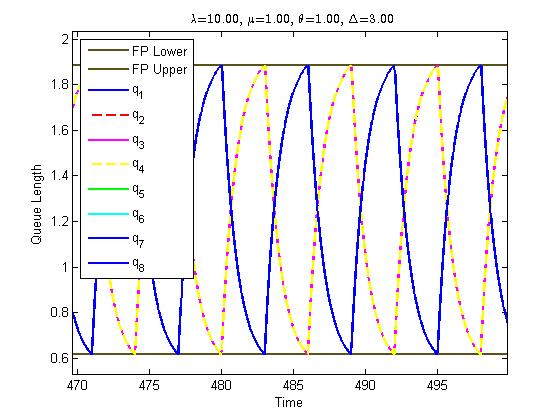}
\caption{$N=8$}
\label{EVEN_PLOTS_8}
\end{figure}

\begin{figure}
\includegraphics[scale=.65 ]{./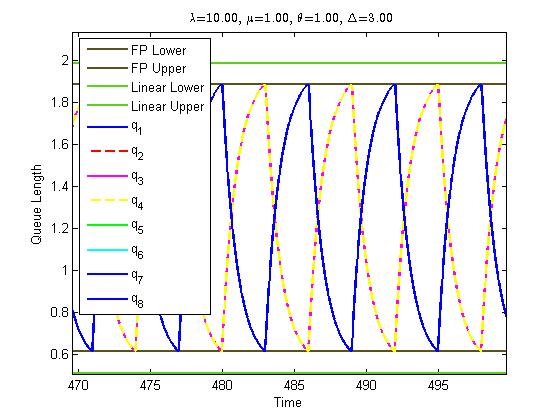}
\caption{$N=8$}
\label{EVEN_PLOTS_9}
\end{figure}

\subsection{The Odd Number of Queues Case}

%%%%%%%%%%\textcolor{blue}{I think the strange case is the situation where $U_1 = L_1$ and $U_2 = \lambda/\mu - L_2$.  So we should do a substitution and see where this leads us.}
%%%%%%%%%%
%%%%%%%%%% \begin{eqnarray}  \label{UequationN1}
%%%%%%%%%%U_1 &=&  L_1 e^{- \mu \Delta} + \frac{\lambda}{\mu} \frac{e^{-\theta \cdot L_1}}{ \frac{N-1}{2} e^{-\theta \cdot L_1} + \frac{N+1}{2}  e^{-\theta \cdot L_2} } (1 - e^{- \mu \Delta}) \\
%%%%%%%%%%L_1 &=&  U_1 e^{- \mu \Delta} + \frac{\lambda}{\mu} \frac{e^{-\theta \cdot U_1}}{ \frac{N-1}{2} e^{-\theta \cdot U_1} + \frac{N+1}{2}  e^{-\theta \cdot U_2}  } (1 - e^{- \mu \Delta}). 
%%%%%%%%%%\end{eqnarray}
%%%%%%%%%%and 
%%%%%%%%%% \begin{eqnarray}  \label{UequationN2}
%%%%%%%%%%U_2 &=&  L_2 e^{- \mu \Delta} + \frac{\lambda}{\mu} \frac{e^{-\theta \cdot L_2}}{ \frac{N-1}{2} e^{-\theta \cdot L_1} + \frac{N+1}{2}  e^{-\theta \cdot L_2} } (1 - e^{- \mu \Delta}) \\
%%%%%%%%%%L_2 &=&  U_2 e^{- \mu \Delta} + \frac{\lambda}{\mu} \frac{e^{-\theta \cdot U_2}}{ \frac{N-1}{2} e^{-\theta \cdot U_1} + \frac{N+1}{2}  e^{-\theta \cdot U_2}  } (1 - e^{- \mu \Delta}). 
%%%%%%%%%%\end{eqnarray}

%%%%%%%%%%%%%%%%%%%%%%%%%%%%%%%%%%%%%%%%%%%

When $N$ is odd, numerics show that $\frac{N-1}{2}$ of the queues are in phase with each other and have the same amplitudes whereas the other $\frac{N+1}{2}$ queues are in phase with each other and approach another amplitude. With this insight, we can reduce the system of $2N$ equations to the following system of four equations. 

\begin{eqnarray}
U_1 &=& L_1 e^{-\mu \Delta} + \rho \frac{e^{-\theta L_1}}{ \left( \frac{N+1}{2}  \right) e^{-\theta L_1} + \left(  \frac{N-1}{2} \right) e^{-\theta U_2} } (1 - e^{-\mu \Delta}) \label{odd_system_U1}\\
L_1 &=& U_1 e^{-\mu \Delta} + \rho \frac{e^{-\theta U_1}}{ \left( \frac{N+1}{2}  \right) e^{-\theta U_1} + \left(  \frac{N-1}{2} \right) e^{-\theta L_2} } (1 - e^{-\mu \Delta}) \label{odd_system_L1}\\
U_2 &=& L_2 e^{-\mu \Delta} + \rho \frac{e^{-\theta L_2}}{ \left( \frac{N-1}{2}  \right) e^{-\theta L_2} + \left(  \frac{N+1}{2} \right) e^{-\theta U_1} } (1 - e^{-\mu \Delta}) \label{odd_system_U2}\\
L_2 &=& U_2 e^{-\mu \Delta} + \rho \frac{e^{-\theta U_2}}{ \left( \frac{N-1}{2}  \right) e^{-\theta U_2} + \left(  \frac{N+1}{2} \right) e^{-\theta L_1} } (1 - e^{-\mu \Delta}) \label{odd_system_L2}
\end{eqnarray}

\noindent This is similar to the two-dimensional case except the two queues are really two sets of queues, both of which have different amplitudes, one of which contains $\frac{N+1}{2}$ queues with the same amplitude and the other contains $\frac{N-1}{2}$ queues with the same amplitude. We account for these different amplitudes by scaling the arrival rate corresponding to one of the sets of queues by $\frac{2}{N + 1}$ and the other by $\frac{2}{N - 1}$ because the true arrival rate is split evenly amongst each queue in each set of queues. In equations \ref{odd_system_U1}-\ref{odd_system_L2}, the variables $L_1$ and $U_1$ can be interpreted as the lower and upper values of the amplitude corresponding to the set of queues containing $\frac{N+1}{2}$ queues whereas $L_2$ and $U_2$ are the analogous values corresponding to the set of queues containing $\frac{N-1}{2}$ queues. This is illustrated in Figure \ref{odd_queues_picture} and we can also see from this figure that one set of queues attains the value $L_1$ at the same time that the other set of queues attains $U_2$ and similarly $U_1$ and $L_2$ are attained by their respective sets of queues at the same time. This motivates the following substituions.
\begin{eqnarray}
\left( \frac{N+1}{2} \right)U_1 +  \left(\frac{N - 1}{2} \right) L_2 &=& \rho\label{odd_sub1}\\
\left( \frac{N+1}{2} \right)L_1 +  \left(\frac{N - 1}{2} \right) U_2 &=& \rho \label{odd_sub2}
\end{eqnarray}

\noindent Using these substitutions, we can further reduce this system of four equations to be the following system of two equations.

\begin{figure}
\includegraphics[scale=.65 ]{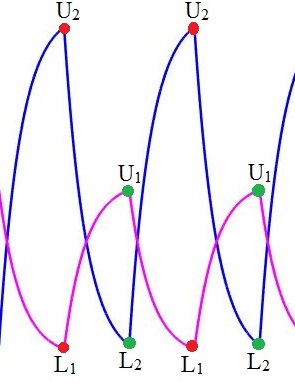}
\caption{Above is a plot of queue length (vertical axis) against time (horizontal axis). The magenta curve corresponds to the set of $\frac{N+1}{2}$ queues whose lengths are all overlapping with each other and all have the same amplitude of $\frac{U_1 - L_1}{2}$. The blue curve corresponds to the set of $\frac{N-1}{2}$ queues whose lengths are all overlapping with each other and all have the same amplitude of $\frac{U_2 - L_2}{2}$.}
\label{odd_queues_picture}
\end{figure}

\begin{eqnarray}
U_1 &=& L_1 e^{-\mu \Delta} + \rho \frac{e^{- \theta L_1}}{\left( \frac{N+1}{2} \right) e^{-\theta L_1} + \left(\frac{N - 1}{2} \right) e^{- \frac{2 \rho \theta}{(N-1) }} e^{\theta \left( \frac{N+1}{N-1} \right) L_1}}(1 - e^{-\mu \Delta}) \label{odd_2_eqns_eq1}\\
L_1 &=& U_1 e^{-\mu \Delta} + \rho \frac{e^{- \theta U_1}}{\left( \frac{N+1}{2} \right) e^{-\theta U_1} + \left(\frac{N - 1}{2} \right) e^{- \frac{2 \rho \theta}{(N-1) }} e^{\theta \left( \frac{N+1}{N-1} \right) U_1}}(1 - e^{-\mu \Delta}) \label{odd_2_eqns_eq2}
\end{eqnarray}

\noindent This system of two nonlinear Equations \ref{odd_2_eqns_eq1}-\ref{odd_2_eqns_eq2} can be solved numerically to give us $L_1$ and $U_1$ which can be used to obtain $L_2$ and $U_2$ by using Equations \ref{odd_sub1}-\ref{odd_sub2} and from these we can obtain the amplitudes for each set of queues, which will be $\frac{U_1 - L_1}{2}$ for the set of queues containing $\frac{N+1}{2}$ queues and $\frac{U_2 - L_2}{2}$ for the set of queues containing $\frac{N-1}{2}$ queues.

\begin{remark}
From Equations \ref{odd_sub1}-\ref{odd_sub2}, we see that $$ \left( \frac{N+1}{N-1} \right) \frac{U_1 - L_1}{2} =  \frac{U_2 - L_2}{2}$$ and thus the amplitude corresponding to the set of $N-1$ queues will differ from the amplitude corresponding to the remaining $N+1$ queues by a factor of $\frac{N+1}{N-1}$. This tells us that in the limit as $N \to \infty$ (and implicitly as $\Delta \to \infty$ because the critical delay increases as $N$ increases and it only makes sense to discuss amplitudes when the system is unstable), the two amplitudes will approach the same value. Additionally, the amplitudes approach $0$ and the linear approximations of the amplitudes that we found also approach $0$ in each case. This is unsurprising as the arrival rate is held fixed and needs to be distributed among increasingly many queues as $N \to \infty$.
\end{remark}

Now that we know that we can numerically solve a system of two nonlinear equations to find the amplitude of each queue, it is reasonable to search for closed-form approximations of the amplitudes. We present such approximations in the following theorem.

%First, for ease of notation, we define the function $g : \mathbb{R} \to \mathbb{R}$ such that $$g(x) := \frac{e^{-\theta x}}{\left( \frac{N+1}{2} \right) e^{-\theta x} + \left( \frac{N - 1}{2} \right) e^{-\frac{2 \lambda \theta}{(N-1) \mu}} e^{\theta \left( \frac{N+1}{N-1} \right)x} }$$ and we let 
%\begin{align*}
%a &:= e^{- \mu \Delta}\\
%b &:= \rho (1 - e^{-\mu \Delta}).
%\end{align*}

\begin{theorem}
When $N$ is an odd integer such that $N \geq 3$, we can obtain linear approximations of the amplitudes which are $$A_1^{(1)} := \frac{(a - 1) L_1^{(1)} + b \cdot g\left( L_1^{(1)} \right)}{2}$$ (which corresponds to the set of queues containing $\frac{N+1}{2}$ queues) and $$A_2^{(1)} := \left( \frac{N+1}{N-1} \right) A_1^{(1)}$$ (which corresponds to the remaining $\frac{N-1}{2}$ queues) where $$L_1^{(1)} =  \frac{ab \cdot g(0) + b \cdot g(b \cdot g(0))}{1 - a^2 - ab \cdot g'(0) - g'(b \cdot g(0))(ab + b^2 \cdot g'(0))}$$ where we define the function $g : \mathbb{R} \to \mathbb{R}$ such that $$g(x) := \frac{e^{-\theta x}}{\left( \frac{N+1}{2} \right) e^{-\theta x} + \left( \frac{N - 1}{2} \right) e^{-\frac{2 \rho \theta}{N-1 }} e^{\theta \left( \frac{N+1}{N-1} \right)x} }$$ and we let 
\begin{align*}
a &:= e^{- \mu \Delta}\\
b &:= \rho (1 - e^{-\mu \Delta}).
\end{align*}

\end{theorem}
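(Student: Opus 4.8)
The plan is to replay the two-dimensional first-order Taylor argument, now applied to the reduced pair of nonlinear equations \eqref{odd_2_eqns_eq1}--\eqref{odd_2_eqns_eq2}. Observe first that the denominators appearing there are exactly the denominator in the definition of $g$, so with $a := e^{-\mu\Delta}$ and $b := \rho(1 - e^{-\mu\Delta})$ those two equations collapse to
\begin{align*}
U_1 &= a\, L_1 + b\, g(L_1),\\
L_1 &= a\, U_1 + b\, g(U_1).
\end{align*}
This cosmetic rewrite (of the system already obtained from the $2N$ equations via \eqref{odd_sub1}--\eqref{odd_sub2}) is the only bookkeeping needed to reach a genuinely one-variable problem.

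First I would eliminate $U_1$ by substituting the first equation into the second, obtaining the scalar fixed-point equation
\[
L_1 \;=\; a^2 L_1 + a b\, g(L_1) + b\, g\!\left(a L_1 + b\, g(L_1)\right).
\]
This is the analogue of the scalar fixed-point equation used in the two-dimensional linear theorem, with the single new feature that $g$ now occurs composed with itself; that is precisely why two linearizations, rather than one, will appear.

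Next I would linearize the right-hand side in $L_1$ about $L_1 = 0$, exactly as in the two-dimensional case. The inner copy of $g$ contributes $g(L_1) \approx g(0) + g'(0) L_1$. For the outer copy one must be slightly careful: the argument $a L_1 + b\, g(L_1)$ is \emph{not} small (at $L_1 = 0$ it already equals $b\, g(0)$, which is the relevant base point in the large-$\Delta$ regime where the amplitude is appreciable), so the correct first-order expansion of the composition is, by the chain rule,
\[
g\!\left(a L_1 + b\, g(L_1)\right) \;\approx\; g\!\left(b\, g(0)\right) + g'\!\left(b\, g(0)\right)\bigl(a + b\, g'(0)\bigr) L_1 .
\]
Substituting both expansions, collecting the coefficient of $L_1$, and dividing gives
\[
L_1^{(1)} \;=\; \frac{a b\, g(0) + b\, g\!\left(b\, g(0)\right)}{1 - a^2 - a b\, g'(0) - g'\!\left(b\, g(0)\right)\bigl(ab + b^2 g'(0)\bigr)},
\]
which is the stated expression for $L_1^{(1)}$.

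Finally I would recover the amplitudes. Feeding $L_1 = L_1^{(1)}$ back into the exact first relation gives $U_1^{(1)} = a\, L_1^{(1)} + b\, g(L_1^{(1)})$, hence the amplitude of the block of $\tfrac{N+1}{2}$ identically-behaving queues is $A_1^{(1)} = \tfrac12\bigl(U_1^{(1)} - L_1^{(1)}\bigr) = \tfrac12\bigl[(a-1)L_1^{(1)} + b\, g(L_1^{(1)})\bigr]$, and the amplitude of the remaining $\tfrac{N-1}{2}$ queues is $A_2^{(1)} = \tfrac{N+1}{N-1} A_1^{(1)}$, consistent with the exact identity $\tfrac{U_2 - L_2}{2} = \tfrac{N+1}{N-1}\tfrac{U_1 - L_1}{2}$ recorded in the Remark above (which itself follows from \eqref{odd_sub1}--\eqref{odd_sub2}). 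The one step that is not purely mechanical is choosing the expansion center $b\, g(0)$ for the outer $g$: expanding about $0$ instead is the naive move and produces a different, numerically worse formula, so the write-up must justify why $b\, g(0)$ is the natural base point. Everything else is the same algebra already carried out for the two-dimensional linear and quadratic approximations, just tracked through one extra composition.
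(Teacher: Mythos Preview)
Your proposal is correct and follows essentially the same approach as the paper: rewrite the reduced two-equation system as $U_1 = aL_1 + b\,g(L_1)$, $L_1 = aU_1 + b\,g(U_1)$, substitute to get a scalar equation in $L_1$, linearize about $L_1 = 0$ (with the outer $g$ expanded about its value $b\,g(0)$ at $L_1 = 0$ via the chain rule), solve for $L_1^{(1)}$, and then recover $U_1^{(1)}$, $A_1^{(1)}$, and $A_2^{(1)}$ exactly as you describe. Your added remark explaining \emph{why} the outer expansion center must be $b\,g(0)$ rather than $0$ is a helpful clarification the paper leaves implicit, but the underlying argument is the same.
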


\begin{proof}

\noindent With the stated definitions, we can concisely rewrite Equations \ref{odd_2_eqns_eq1}-\ref{odd_2_eqns_eq2} as follows.
\begin{eqnarray}
U_1 &=& a L_1 + b \cdot g(L_1) \label{concise_odd_eq1}\\
L_1 &=& a U_1 + b \cdot g(U_1) \label{concise_odd_eq2}
\end{eqnarray}

\noindent To get our approximation for the amplitude, we will take the expression for $U_1$ given by Equation \ref{concise_odd_eq1} and substitute it into Equation \ref{concise_odd_eq2} and then linearize the resulting nonlinear function about $L_1 = 0$. Following this approach, we have that Equation \ref{concise_odd_eq2} becomes
\begin{align}
L_1 &= a (a L_1 + b \cdot g(L_1)) + b \cdot g(aL_1 + b \cdot g(L_1))\\
&= a^2 L_1 + ab \cdot g(L_1) + b \cdot g(aL_1 + b \cdot g(L_1))\\
&\approx a^2 L_1 + ab [g(0) + g'(0) L_1] + b [g(b \cdot g(0)) + g'(b \cdot g(0)) (a + b \cdot g'(0)) L_1 ]
\end{align}

\noindent where 
\begin{align}
g(0) &= \frac{2}{(N+1) + (N-1) e^{- \frac{2 \rho \theta}{N-1}}}\\
g'(0) &= \frac{-2 \theta (N+1) e^{- \frac{2 \rho \theta}{N-1}}}{\left( (N+1) + (N-1) e^{- \frac{2 \rho \theta}{N - 1}} \right)^2}
\end{align}

\noindent and this gives us that 
\begin{align}
L_1 \approx L_1^{(1)} := \frac{ab \cdot g(0) + b \cdot g(b \cdot g(0))}{1 - a^2 - ab \cdot g'(0) - g'(b \cdot g(0))(ab + b^2 \cdot g'(0))}
\end{align}
\noindent and that 
\begin{align}
U_1 \approx U_1^{(1)} := a L_1^{(1)} + b \cdot g \left(L_1^{(1)} \right)
\end{align}
\noindent so that our amplitude approximation is 
\begin{align}
A_1^{(1)} := \frac{U_1^{(1)} - L_1^{(1)}}{2} \label{odd_amp1_approx} = \frac{(a - 1) L_1^{(1)} + b \cdot g\left( L_1^{(1)} \right)}{2}. 
\end{align}

\noindent Similarly, we define the approximations $L_2^{(1)}$ and $U_2^{(1)}$ according to the equations 
\begin{eqnarray}
\left( \frac{N+1}{2} \right)U_1^{(1)} +  \left(\frac{N - 1}{2} \right) L_2^{(1)} &=& \rho\\
\left( \frac{N+1}{2} \right)L_1^{(1)} +  \left(\frac{N - 1}{2} \right) U_2^{(1)} &=& \rho 
\end{eqnarray}

\noindent so that
\begin{align}
L_2^{(1)} &:= \frac{2 \rho}{N - 1} - \left( \frac{N+1}{N-1} \right) U_1^{(1)}\\
&:= \frac{2 \rho}{N - 1} - \left( \frac{N+1}{N-1} \right) \left( a L_1^{(1)} + b \cdot g \left( L_1^{(1)} \right) \right)
\end{align}
\noindent and 
\begin{align}
U_2^{(1)} := \frac{2 \rho}{N - 1} - \left( \frac{N+1}{N-1} \right) L_1^{(1)}
\end{align}

\noindent and we let the approximation to the other amplitude be 
\begin{align}
A_2^{(1)} &:= \frac{U_2^{(1)} - L_2^{(1)}}{2} \label{odd_amp2_approx} \\
&= \left( \frac{N+1}{N-1} \right) \left( \frac{(a - 1) L_1^{(1)} + b \cdot g\left( L_1^{(1)} \right)}{2} \right)\\
&= \left( \frac{N+1}{N-1} \right) A_1^{(1)}
\end{align}

\end{proof}

We explore the accuracy of these amplitude approximations below in Tables \ref{delta_approx_table_N_3}-\ref{delta_approx_table_N_9} where we consider $N=3, N=5, N=7,$ and $N=9$ (respectively) and compare solving the nonlinear system \ref{odd_2_eqns_eq1}-\ref{odd_2_eqns_eq2} to obtain the amplitude against the linear approximations obtained in \ref{odd_amp1_approx} and \ref{odd_amp2_approx}. In each of these tables, we show values corresponding to each of the two amplitudes present in the system, whose values differ by a factor of $\frac{N+1}{N-1}$, for various values of $\Delta$. In the Nonlinear (1) and Nonlinear (2) columns, we give the value of each of the amplitudes obtained by numerically solving the nonlinear system \ref{odd_2_eqns_eq1}-\ref{odd_2_eqns_eq2} and then computing $\frac{U_1 - L_1}{2}$ and $\frac{U_2 - L_2}{2}$ to obtain the amplitudes. In the Linear (1) and Linear (2) columns, we give the approximate amplitude values obtained by using the first-order Taylor approximation that we came up with in Equations \ref{odd_amp1_approx} and \ref{odd_amp2_approx}. We can observe that the linear amplitude approximation tends to get more accurate as $\Delta$ increases and less accurate as $N$ increases. Additionally, the linear approximation appears to be an upper bound for the actual value of the amplitude.

%%%%% Tables below

%$$N=3$$

\begin{table}[H]
\begin{center}
\begin{tabular}{ |c | c | c | c | c| }
\hline
 $\Delta$ & Nonlinear (1) & Linear (1) & Nonlinear (2) & Linear (2) \\ 
\hline
1.50 & 1.3793 & 1.4267 & 2.7587 & 2.8534\\ 
\hline
1.70 & 1.5553 & 1.5805 & 3.1107 & 3.1609\\ 
\hline
1.90 & 1.7081 & 1.7192 & 3.4162 & 3.4385\\ 
\hline
2.10 & 1.8381 & 1.8427 & 3.6762 & 3.6854\\ 
\hline
2.30 & 1.9473 & 1.9491 & 3.8946 & 3.8983\\ 
\hline
2.50 & 2.0383 & 2.0391 & 4.0766 & 4.0781\\ 
\hline
2.70 & 2.1138 & 2.1141 & 4.2276 & 4.2282\\ 
\hline
2.90 & 2.1763 & 2.1764 & 4.3525 & 4.3528\\ 
\hline
3.10 & 2.2279 & 2.2279 & 4.4557 & 4.4559\\ 
\hline
3.30 & 2.2704 & 2.2705 & 4.5409 & 4.5409\\ 
\hline
3.50 & 2.3055 & 2.3055 & 4.6110 & 4.6110\\ 
\hline
3.70 & 2.3344 & 2.3344 & 4.6687 & 4.6687\\ 
\hline
3.90 & 2.3581 & 2.3581 & 4.7162 & 4.7162\\ 
\hline
\end{tabular}
\end{center}
\caption{$N = 3, \lambda=10$, $\mu=1$, and $\theta=1$}
%\caption{The values of both of the amplitudes of the oscillations in the (N=3) update system for various values of $\Delta$ according to solving the nonlinear system \ref{odd_2_eqns_eq1}-\ref{odd_2_eqns_eq2} as well as the linear approximation. The Nonlinear columns correspond to numerically solving the nonlinear system \ref{odd_2_eqns_eq1}-\ref{odd_2_eqns_eq2} and then computing $\frac{U_1 - L_1}{2}$ and $\frac{U_2 - L_2}{2}$ to obtain the amplitudes. The Linear columns use the first-order Taylor approximation that we came up with in Equations \ref{odd_amp1_approx} and \ref{odd_amp2_approx}. Other parameters used were $\lambda=10$, $\mu=1$, and $\theta=1$ for all cases.}
\label{delta_approx_table_N_3}
\end{table}

\begin{table}[H]
\begin{center}
\begin{tabular}{ |c | c | c | c | c| }
\hline
 $\Delta$ & Nonlinear (1) & Linear (1) & Nonlinear (2) & Linear (2) \\ 
\hline
1.90 & 1.0046 & 1.0304 & 1.5068 & 1.5456\\ 
\hline
2.10 & 1.1030 & 1.1184 & 1.6545 & 1.6777\\ 
\hline
2.30 & 1.1841 & 1.1938 & 1.7761 & 1.7906\\ 
\hline
2.50 & 1.2511 & 1.2575 & 1.8766 & 1.8862\\ 
\hline
2.70 & 1.3065 & 1.3109 & 1.9597 & 1.9664\\ 
\hline
2.90 & 1.3523 & 1.3555 & 2.0285 & 2.0333\\ 
\hline
3.10 & 1.3902 & 1.3926 & 2.0853 & 2.0889\\ 
\hline
3.30 & 1.4214 & 1.4233 & 2.1322 & 2.1349\\ 
\hline
3.50 & 1.4472 & 1.4487 & 2.1708 & 2.1730\\ 
\hline
3.70 & 1.4684 & 1.4696 & 2.2026 & 2.2044\\ 
\hline
3.90 & 1.4858 & 1.4869 & 2.2287 & 2.2303\\ 
\hline
4.10 & 1.5002 & 1.5010 & 2.2503 & 2.2516\\ 
\hline
4.30 & 1.5119 & 1.5127 & 2.2679 & 2.2691\\ 
\hline
\end{tabular}
\end{center}
\caption{$N = 5, \lambda=10$, $\mu=1$, and $\theta=1$}
%\caption{The values of both of the amplitudes of the oscillations in the (N=5) update system for various values of $\Delta$ according to solving the nonlinear system \ref{odd_2_eqns_eq1}-\ref{odd_2_eqns_eq2} as well as the linear approximation. The Nonlinear columns correspond to numerically solving the nonlinear system \ref{odd_2_eqns_eq1}-\ref{odd_2_eqns_eq2} and then computing $\frac{U_1 - L_1}{2}$ and $\frac{U_2 - L_2}{2}$ to obtain the amplitudes. The Linear columns use the first-order Taylor approximation that we came up with in Equations \ref{odd_amp1_approx} and \ref{odd_amp2_approx}. Other parameters used were $\lambda=10$, $\mu=1$, and $\theta=1$ for all cases.}
\label{delta_approx_table_N_5}
\end{table}

\begin{table}[H]
\begin{center}
\begin{tabular}{ |c | c | c | c | c| }
\hline
 $\Delta$ & Nonlinear (1) & Linear (1) & Nonlinear (2) & Linear (2) \\ 
\hline
2.60 & 0.7299 & 0.7887 & 0.9732 & 1.0517\\ 
\hline
2.80 & 0.7847 & 0.8303 & 1.0463 & 1.1071\\ 
\hline
3.00 & 0.8285 & 0.8651 & 1.1046 & 1.1535\\ 
\hline
3.20 & 0.8638 & 0.8941 & 1.1517 & 1.1921\\ 
\hline
3.40 & 0.8923 & 0.9181 & 1.1898 & 1.2241\\ 
\hline
3.60 & 0.9155 & 0.9380 & 1.2207 & 1.2507\\ 
\hline
3.80 & 0.9345 & 0.9544 & 1.2459 & 1.2726\\ 
\hline
4.00 & 0.9499 & 0.9680 & 1.2665 & 1.2906\\ 
\hline
4.20 & 0.9625 & 0.9791 & 1.2833 & 1.3055\\ 
\hline
4.40 & 0.9728 & 0.9883 & 1.2970 & 1.3177\\ 
\hline
4.60 & 0.9812 & 0.9958 & 1.3083 & 1.3277\\ 
\hline
4.80 & 0.9881 & 1.0020 & 1.3174 & 1.3360\\ 
\hline
5.00 & 0.9937 & 1.0071 & 1.3249 & 1.3427\\ 
\hline
\end{tabular}
\end{center}
\caption{$N = 7, \lambda=10$, $\mu=1$, and $\theta=1$}
%\caption{The values of both of the amplitudes of the oscillations in the (N=7) update system for various values of $\Delta$ according to solving the nonlinear system \ref{odd_2_eqns_eq1}-\ref{odd_2_eqns_eq2} as well as the linear approximation. The Nonlinear columns correspond to numerically solving the nonlinear system \ref{odd_2_eqns_eq1}-\ref{odd_2_eqns_eq2} and then computing $\frac{U_1 - L_1}{2}$ and $\frac{U_2 - L_2}{2}$ to obtain the amplitudes. The Linear columns use the first-order Taylor approximation that we came up with in Equations \ref{odd_amp1_approx} and \ref{odd_amp2_approx}. Other parameters used were $\lambda=10$, $\mu=1$, and $\theta=1$ for all cases.}
\label{delta_approx_table_N_7}
\end{table}

\begin{table}[H]
\begin{center}
\begin{tabular}{ |c | c | c | c | c| }
\hline
 $\Delta$ & Nonlinear (1) & Linear (1) & Nonlinear (2) & Linear (2) \\ 
\hline
3.80 & 0.3872 & 0.5646 & 0.4840 & 0.7057\\ 
\hline
4.00 & 0.4136 & 0.5758 & 0.5170 & 0.7197\\ 
\hline
4.20 & 0.4342 & 0.5850 & 0.5427 & 0.7313\\ 
\hline
4.40 & 0.4505 & 0.5927 & 0.5631 & 0.7409\\ 
\hline
4.60 & 0.4635 & 0.5990 & 0.5794 & 0.7488\\ 
\hline
4.80 & 0.4740 & 0.6042 & 0.5925 & 0.7553\\ 
\hline
5.00 & 0.4824 & 0.6085 & 0.6030 & 0.7606\\ 
\hline
5.20 & 0.4892 & 0.6120 & 0.6115 & 0.7650\\ 
\hline
5.40 & 0.4948 & 0.6149 & 0.6185 & 0.7686\\ 
\hline
5.60 & 0.4993 & 0.6173 & 0.6241 & 0.7716\\ 
\hline
5.80 & 0.5029 & 0.6192 & 0.6287 & 0.7740\\ 
\hline
6.00 & 0.5059 & 0.6208 & 0.6324 & 0.7760\\ 
\hline
6.20 & 0.5084 & 0.6221 & 0.6355 & 0.7776\\ 
\hline
\end{tabular}
\end{center}
\caption{$N = 9, \lambda=10$, $\mu=1$, and $\theta=1$}
%\caption{The values of both of the amplitudes of the oscillations in the (N=9) update system for various values of $\Delta$ according to solving the nonlinear system \ref{odd_2_eqns_eq1}-\ref{odd_2_eqns_eq2} as well as the linear approximation. The Nonlinear columns correspond to numerically solving the nonlinear system \ref{odd_2_eqns_eq1}-\ref{odd_2_eqns_eq2} and then computing $\frac{U_1 - L_1}{2}$ and $\frac{U_2 - L_2}{2}$ to obtain the amplitudes. The Linear columns use the first-order Taylor approximation that we came up with in Equations \ref{odd_amp1_approx} and \ref{odd_amp2_approx}. Other parameters used were $\lambda=10$, $\mu=1$, and $\theta=1$ for all cases.}
\label{delta_approx_table_N_9}
\end{table}

Next we consider various plots of queue length against time in Figures \ref{ODD_PLOTS_1}-\ref{ODD_PLOTS_13} for $N=3, N=5, N=7, $ and $N=9$. In Figure \ref{ODD_PLOTS_1} we plot queue length against time for $N=3$ and we see that two of the queue lengths converge to the smaller amplitude and the other queue converges to the larger amplitude. In this case, $\Delta = 3$ which is sufficiently large for all of the queue lengths to be unstable. An interesting phenomenon that we observed in the $N = 3$ case (but did not see in any of the $N \neq 3$ cases) is that if $\Delta$ is larger than the critical delay but not too large, then only two of the three queues will be unstable. This phenomenon is shown in Figure \ref{ODD_PLOTS_2} where we have that $\Delta = 1.5$ (for reference, the critical delay is approximately 0.619 for the chosen parameters in this case). Additionally, in Figure \ref{ODD_PLOTS_3} we plot the amplitude bars computed from solving the nonlinear system and in Figure \ref{ODD_PLOTS_4} we also plot amplitude bars corresponding to the linear amplitude approximation. 

In Figure \ref{ODD_PLOTS_5} we consider the case where $N = 5$ and we see that two of the queue lengths converge to the larger amplitude while the other three queue lengths converge to the smaller queue length. In Figure \ref{ODD_PLOTS_6} we plot the amplitude bars computed from solving the nonlinear system and in Figure \ref{ODD_PLOTS_7} we also plot amplitude bars corresponding to the linear approximation.

In Figure \ref{ODD_PLOTS_8} we consider the case where $N = 7$ and we see that three of the queue lengths converge to the larger amplitude while the other four queue lengths converge to the smaller queue length. In Figure \ref{ODD_PLOTS_9} we plot the amplitude bars computed from solving the nonlinear system and in Figure \ref{ODD_PLOTS_10} we also plot amplitude bars corresponding to the linear approximation.

In Figure \ref{ODD_PLOTS_10} we consider the case where $N = 9$ and we see that four of the queue lengths converge to the larger amplitude while the other five queue lengths converge to the smaller queue length. In Figure \ref{ODD_PLOTS_11} we plot the amplitude bars computed from solving the nonlinear system and in Figure \ref{ODD_PLOTS_12} we also plot amplitude bars corresponding to the linear approximation.

While the value of $\Delta$ used for these figures varied (intentionally, to get plots that are easier to qualitatively examine), we see that the figures support the observation made from Tables \ref{delta_approx_table_N_3}-\ref{delta_approx_table_N_9} that the linear approximation is less accurate when $N$ is larger (noting that we used values of $\Delta$ greater than or equal to those used for smaller values of $N$).

%%%%%%%%%%%%%%%%%%%%% plots

%%%%% N = 3

%\subsection{$N = 3$}
\begin{figure}[ht]
\includegraphics[scale=.65 ]{./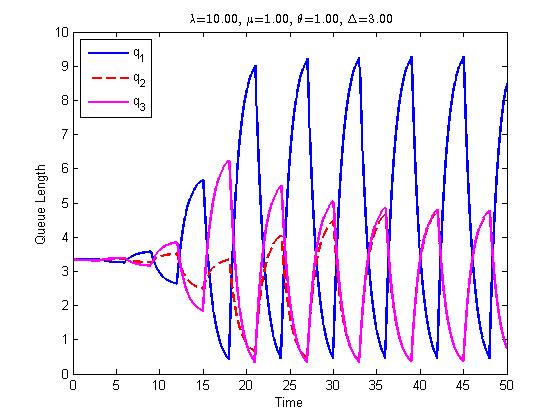}
\caption{$N=3$}
\label{ODD_PLOTS_1}
\end{figure}

\begin{figure}
\includegraphics[scale=.65 ]{./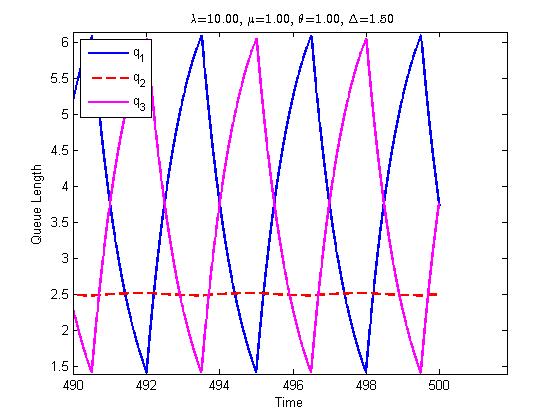}
\caption{Strange case in $N=3$ where one queue length decays}
\label{ODD_PLOTS_2}
\end{figure}

\begin{figure}
\includegraphics[scale=.65 ]{./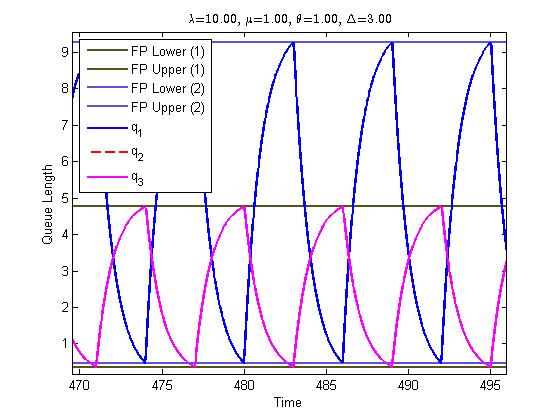}
\caption{$N=3$}
\label{ODD_PLOTS_3}
\end{figure}

\begin{figure}
\includegraphics[scale=.65 ]{./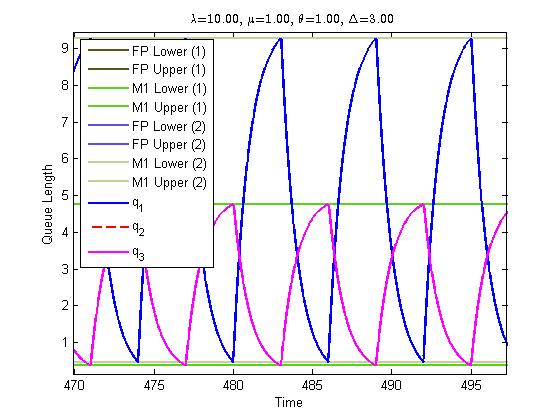}
\caption{$N=3$}
\label{ODD_PLOTS_4}
\end{figure}

%%%%% N = 5

%\subsection{$N = 5$}
\begin{figure}
\includegraphics[scale=.65 ]{./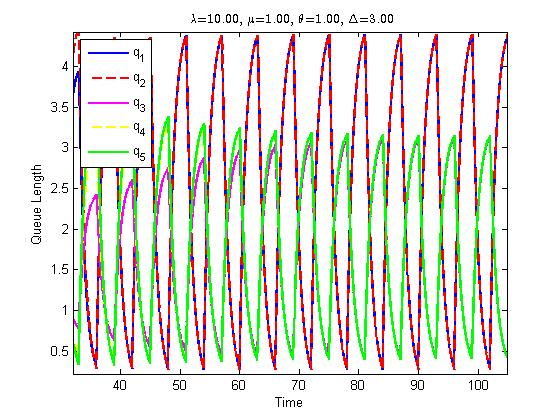}
\caption{$N=5$}
\label{ODD_PLOTS_5}
\end{figure}

\begin{figure}
\includegraphics[scale=.65 ]{./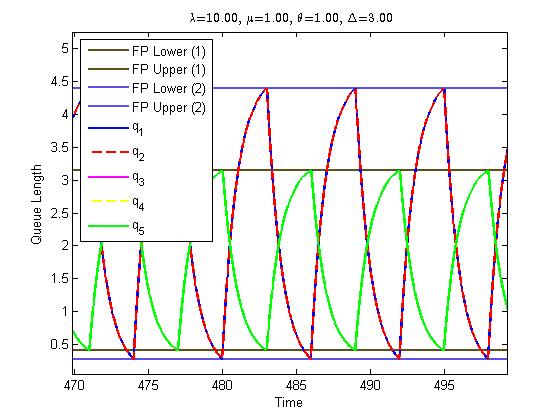}
\caption{$N=5$}
\label{ODD_PLOTS_6}
\end{figure}

\begin{figure}
\includegraphics[scale=.65 ]{./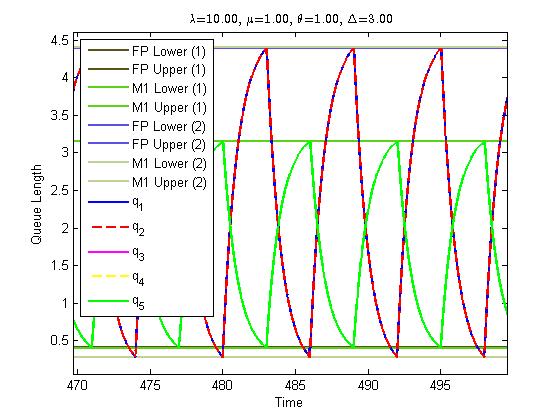}
\caption{$N=5$}
\label{ODD_PLOTS_7}
\end{figure}

%%%%% N = 7

%\subsection{$N = 7$}
\begin{figure}
\includegraphics[scale=.65 ]{./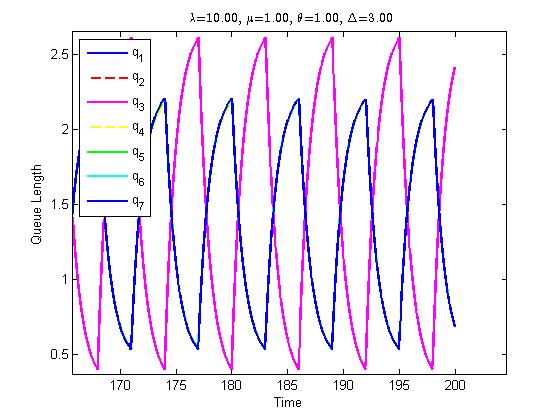}
\caption{$N=7$}
\label{ODD_PLOTS_8}
\end{figure}

\begin{figure}
\includegraphics[scale=.65 ]{./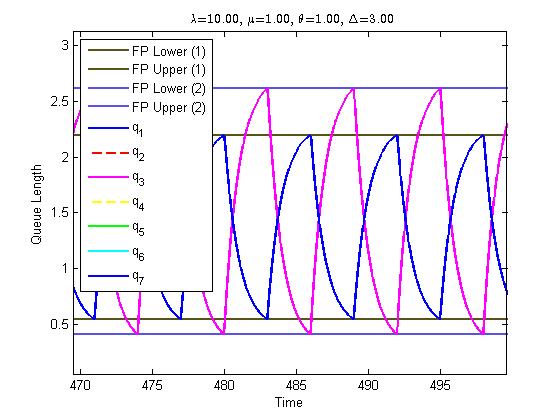}
\caption{$N=7$}
\label{ODD_PLOTS_9}
\end{figure}

\begin{figure}
\includegraphics[scale=.65 ]{./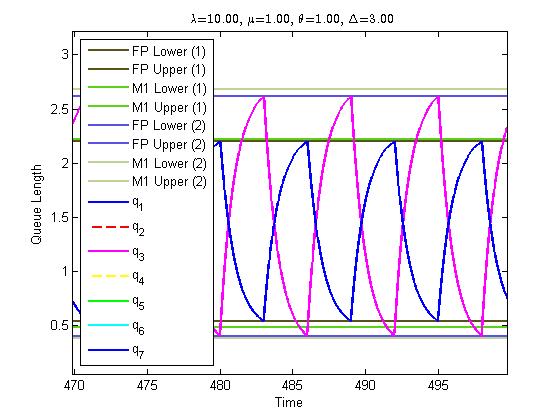}
\caption{$N=7$}
\label{ODD_PLOTS_10}
\end{figure}

%%%%% N = 9

%\subsection{$N = 9$}
\begin{figure}
\includegraphics[scale=.65 ]{./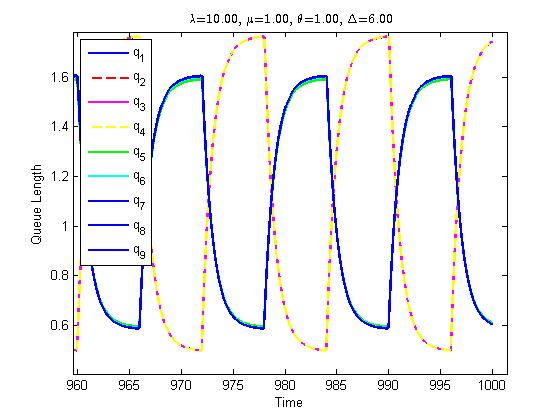}
\caption{$N=9$}
\label{ODD_PLOTS_11}
\end{figure}

\begin{figure}
\includegraphics[scale=.65 ]{./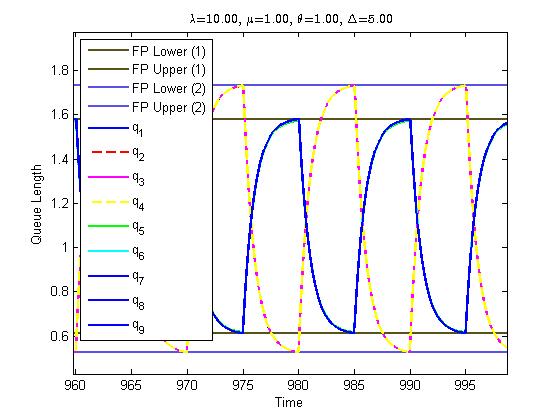}
\caption{$N=9$}
\label{ODD_PLOTS_12}
\end{figure}

\begin{figure}
\includegraphics[scale=.65 ]{./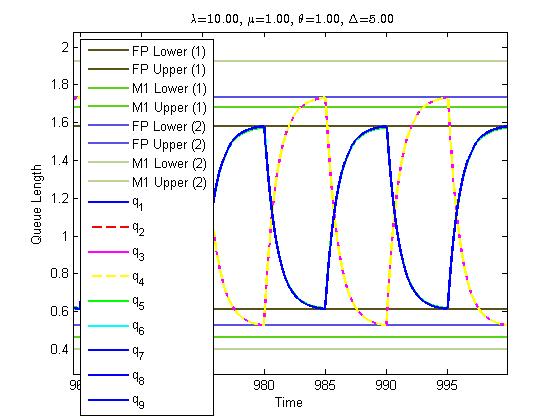}
\caption{$N=9$}
\label{ODD_PLOTS_13}
\end{figure}

%%%%%%%%%%%%%%%%%%%%%%%%%%%%
\section{Conclusion and Further Research}
\label{conclusion}

In this paper we analyze a system of functional differential equations which models a queueing system with updating information. In particular, we found out how to compute the amplitude of the queue length oscillations that occur after the system experiences a Hopf bifurcation. The methods used in our work differ from those used in the existing literature because the FDE system we consider uses a non-stationary delay and thus techniques used in the existing literature, such as Lindstedt's method which uses asymptotic analysis to approximate the amplitude of oscillations, do not apply to our problem. Indeed, we were able to find a fixed-point equation in the two-dimensional case which can be solved to allow us to exactly compute the amplitude of the oscillatons. Extensions of this fixed-point equation were obtained in the $N$-dimensional case for $N > 2$ as well, where we separately considered the cases for which $N$ is odd or even. While this fixed-point equation and its $N$-dimensional analogues theoretically provide us with a method for exactly computing the amplitude, we can only solve them numerically in practice. Because of this, we developed closed-form approximations of the amplitude via Taylor expansions which we numerically tested the accuracy of for several values of $\Delta$.

There are several potential extensions for future research on this topic. One possible extension could be to consider a choice model that depends on different information, such as information about the time derivative of queue lengths evaluated at the most recent update time. Exploring such an extension could give more insight into how providing customers with different types of information impacts the dynamics of updating queueing systems. Another possible direction for future work could be to try to analyze updating queueing systems that use a time-varying arrival rate instead of the constant arrival rate used in our model. An interesting observation that we made in the three-dimensional case is that one of the three queue lengths had an amplitude that decayed to zero for some values of $\Delta$ that were larger than the critical delay for this system and that this third queue length will approach a nonzero amplitude if $\Delta$ is sufficiently increased past these values. Therefore, there appears to be a second "critical" value of $\Delta$ at which the stability of the third queue changes. Finding a way to compute this second critical value as well as better understanding why we only observed this phenomenon in the three-dimensional case as opposed to higher-dimensional cases would be interesting problems to consider. 

\bibliographystyle{plainnat}
\bibliography{updating_amplitude}

\begin{thebibliography}{18}
\providecommand{\natexlab}[1]{#1}
\providecommand{\url}[1]{\texttt{#1}}
\expandafter\ifx\csname urlstyle\endcsname\relax
  \providecommand{\doi}[1]{doi: #1}\else
  \providecommand{\doi}{doi: \begingroup \urlstyle{rm}\Url}\fi

\bibitem[Aftabizadeh and Wiener(1987)]{aftabizadeh1987differential}
AR~Aftabizadeh and Joseph Wiener.
\newblock Differential inequalities for delay differential equations with
  piecewise constant argument.
\newblock \emph{Applied Mathematics and Computation}, 24\penalty0 (3):\penalty0
  183--194, 1987.

\bibitem[Cooke and Wiener(1984)]{cooke1984retarded}
Kenneth~L Cooke and Joseph Wiener.
\newblock Retarded differential equations with piecewise constant delays.
\newblock \emph{Journal of Mathematical Analysis and Applications}, 99\penalty0
  (1):\penalty0 265--297, 1984.

\bibitem[Cooke and Wiener(1991)]{cooke1991survey}
Kenneth~L Cooke and Joseph Wiener.
\newblock A survey of differential equations with piecewise continuous
  arguments.
\newblock In \emph{Delay Differential Equations and Dynamical Systems}, pages
  1--15. Springer, 1991.

\bibitem[Lipshutz(2017)]{lipshutz2017exit}
David Lipshutz.
\newblock Exit time asymptotics for small noise stochastic delay differential
  equations.
\newblock \emph{arXiv preprint arXiv:1710.09771}, 2017.

\bibitem[Lipshutz and Atar(2018)]{Lipshutz2018}
David Lipshutz and Rami Atar.
\newblock Heavy traffic limits for join the shortest estimated queue policy
  using delayed information.
\newblock Technical report, Technion, 2018.

\bibitem[Lipshutz and Williams(2015)]{lipshutz2015existence}
David Lipshutz and Ruth~J Williams.
\newblock Existence, uniqueness, and stability of slowly oscillating periodic
  solutions for delay differential equations with nonnegativity constraints.
\newblock \emph{SIAM Journal on Mathematical Analysis}, 47\penalty0
  (6):\penalty0 4467--4535, 2015.

\bibitem[Louisell(2001)]{louisell2001delay}
James Louisell.
\newblock Delay differential systems with time-varying delay: New directions
  for stability theory.
\newblock \emph{Kybernetika}, 37\penalty0 (3):\penalty0 239--251, 2001.

\bibitem[Mitzenmacher(2000)]{mitzenmacher2000useful}
Michael Mitzenmacher.
\newblock How useful is old information?
\newblock \emph{IEEE Transactions on Parallel and Distributed Systems},
  11\penalty0 (1):\penalty0 6--20, 2000.

\bibitem[Niculescu et~al.(1998)Niculescu, de~Souza, Dugard, and
  Dion]{niculescu1998robust}
S-I Niculescu, Carlos~E de~Souza, Luc Dugard, and J-M Dion.
\newblock Robust exponential stability of uncertain systems with time-varying
  delays.
\newblock \emph{IEEE Transactions on Automatic Control}, 43\penalty0
  (5):\penalty0 743--748, 1998.

\bibitem[Nirenberg et~al.(2018)Nirenberg, Daw, and Pender]{nirenberg2018impact}
Samantha Nirenberg, Andrew Daw, and Jamol Pender.
\newblock The impact of queue length rounding and delayed app information on
  disney world queues.
\newblock In \emph{Simulation Conference (WSC), 2018 Winter}. IEEE, 2018.

\bibitem[Novitzky and Pender()]{novitzky2020update}
Sophia Novitzky and Jamol Pender.
\newblock To update or not update: Queues with information updates.
\newblock \emph{Submitted to Stochastic Systems}.

\bibitem[Novitzky et~al.(2019)Novitzky, Pender, Rand, and
  Wesson]{novitzky2019nonlinear}
Sophia Novitzky, Jamol Pender, Richard~H Rand, and Elizabeth Wesson.
\newblock Nonlinear dynamics in queueing theory: Determining the size of
  oscillations in queues with delay.
\newblock \emph{SIAM Journal on Applied Dynamical Systems}, 18\penalty0
  (1):\penalty0 279--311, 2019.

\bibitem[Novitzky et~al.(2020)Novitzky, Pender, Rand, and
  Wesson]{novitzky2020limiting}
Sophia Novitzky, Jamol Pender, Richard~H Rand, and Elizabeth Wesson.
\newblock Limiting the oscillations in queues with delayed information through
  a novel type of delay announcement.
\newblock \emph{Queueing Systems}, 95\penalty0 (3):\penalty0 281--330, 2020.

\bibitem[Pender et~al.(2017)Pender, Rand, and Wesson]{pender2017queues}
Jamol Pender, Richard~H Rand, and Elizabeth Wesson.
\newblock Queues with choice via delay differential equations.
\newblock \emph{International Journal of Bifurcation and Chaos}, 27\penalty0
  (04):\penalty0 1730016, 2017.

\bibitem[Pender et~al.(2018{\natexlab{a}})Pender, Rand, and
  Wesson]{pender2017strong}
Jamol Pender, Richard Rand, and Elizabeth Wesson.
\newblock Stochastic analysis of queues with choice and constant delays.
\newblock \emph{Under Review}, 2018{\natexlab{a}}.

\bibitem[Pender et~al.(2018{\natexlab{b}})Pender, Rand, and
  Wesson]{pender2018analysis}
Jamol Pender, Richard~H Rand, and Elizabeth Wesson.
\newblock An analysis of queues with delayed information and time-varying
  arrival rates.
\newblock \emph{Nonlinear Dynamics}, 91\penalty0 (4):\penalty0 2411--2427,
  2018{\natexlab{b}}.

\bibitem[Silkowski(1979)]{silkowski1979star}
Richard Silkowski.
\newblock A star-shaped condition for stability of linear retarded functional
  differential equations.
\newblock \emph{Proceedings of the Royal Society of Edinburgh Section A:
  Mathematics}, 83\penalty0 (3-4):\penalty0 189--198, 1979.

\bibitem[Wiener and Cooke(1989)]{wiener1989oscillations}
Joseph Wiener and Kenneth~L Cooke.
\newblock Oscillations in systems of differential equations with piecewise
  constant argument.
\newblock \emph{Journal of mathematical analysis and applications},
  137\penalty0 (1):\penalty0 221--239, 1989.

\end{thebibliography}

\end{document}